\newtheorem*{thm*}{Theorem}
\newtheorem{thm}{Theorem}[section]
\newtheorem{lem}[thm]{Lemma}
\newtheorem{cor}[thm]{Corollary}
\newtheorem{prop}[thm]{Proposition}
\newtheorem{conj}[thm]{Conjecture}
\theoremstyle{definition} 
\newtheorem{definition}[thm]{Definition}
\newtheorem{ex}[thm]{Example}
\newtheorem{remark}[thm]{Remark}
\newtheorem{remarks}[thm]{Remarks}
\numberwithin{equation}{section}
\newcommand{\cs}{\ensuremath{C^*}}
\newcommand{\adj}{\ensuremath{\mathcal{L}}}
\newcommand{\bK}{\mathbb{K}}
\newcommand{\Q}{\mathcal{Q}}
\newcommand{\R}{\mathcal{R}}
\newcommand{\id}{\text{id}}
\newcommand{\pr}{\text{pr}}
\newcommand{\sk}{\times_\kappa}
\newcommand{\supp}{\text{supp}}
\newcommand{\rg}{\text{rg}}
\newcommand{\inv}{^{-1}}
\newcommand{\rt}{\text{rt}}
\newcommand{\proj}{\text{proj}}
\title{Coactions and Skew Products for Toplogical Quivers}
\author[Hall]{Lucas Hall}
\address{School of Mathematical and Statistical Sciences
\\Arizona State University
\\Tempe, Arizona 85287}
\email{lhall10@asu.edu}
\begin{document}
\maketitle

\begin{abstract}
Given a cocycle on a topological quiver by a locally compact group, the author constructs a skew product topological quiver, and determines conditions under which a topological quiver can be identified as a skew product. We investigate the relationship between the \cs-algebra of the skew product and a certain native coaction on the \cs-algebra of the original quiver, finding that the crossed product by the coaction is isomorphic to the skew product. As an application, we show that the reduced crossed product by the dual action is Morita equivalent to the \cs-algebra of the original quiver.
\end{abstract}

\section{Introduction}

The dynamics of \cs-algebras has long been an area of interest for operator algebraists. This theory of crossed product duality simplifies when acting groups are abelian, but the wide array of uses left a desire for generalizations to duality suitable for nonabelian groups. This desire was furnished by the duality theorem of Imai and Takai, which depended on the design of coactions to replace the role of the dual group in the nonabelian setting. While these objects greatly generalized dynamical results, the cost was significant. The loss of the dual group in favor of coactions in this setting cast this new field into a challenging intersection of tensor products and multiplier algebras. 

Independently, the introduction of graph algebras in the late twentieth century shed much new light in the field of operator algebras. Popularized as \cs-algebras that you can ``see,'' graph algebras succinctly relate the algebraic structure of the \cs-algebra to observable combinatorial data associated to the defining (directed) graph. Their ongoing study is simultaneously tractable and fruitful, since on one hand the hope of using simple pictures to analyze a \cs-algebra is both clear and stimulating, while on the other hand \cs-algebras from across the field have found natural expressions as graph algebras. Once again, this wide array of uses has produced an interest in generalizing the constructions of graph algebras, which have been pursued along many different lines. Examples include higher ranked graphs, Exel-Laca algebras, topological graphs, and the subject of this paper, topological quivers. 

One boon graph algebras offered to the theory of nonabelian duality is the incredible leverage offered by ``seeing'' how the coactions work in the context of graph algebras. The first result of this type \cite{kqr:graph} considered labelings of the edge set by a discrete group, introducing a new directed graph whose vertex and edge sets are products of the originals with the discrete group. At the same time, this labeling induced a coaction on the original graph algebra, and the authors showed in \cite{kqr:graph}*{Theorem 2.4} that these two constructions are isomorphic using the technology devoted to graph algebras, namely Cuntz-Krieger families and the Gauge-Invariant Uniqueness Theorem. 

Later, Katsura codefied the data of a graph algebra into a \cs-correspondence \cite{kat:class}, and offered the modern perspective \cite{kat02} that the graph algebra of a specified graph is the universal \cs-algebra for (what we would call today) Cuntz-Pimsner covariant homomorphisms of the associated \cs-correspondence. Here, universal means that every Cuntz-Pimsner covariant representation of the correspondence into a \cs-algebra induces a homomorphism from the universal algebra into the representing algebra. Simultaneously, Katsura introduced the notion of topological graphs, which offered the first continuous analog to directed graphs by puting topological constraints on the data. 

Equipped with the technology of Cuntz-Pimsner covariant homomorphisms into \cs-algebras, Kaliszewski, Quigg, and Robertson introduced the notion of Cuntz-Pimsner covariance for morphisms \emph{between correspondences} \cite{kqrfunctor}. There, they showed that the passage from \cs-cor\-re\-spon\-dences to the Cuntz-Pimsner algebra was functorial between a suitible pair of categories. Shortly after, these three authors investigated this functor for correspondences equipped with a coaction\cite{kqrcoact}, deducing that a suitable correspondence coaction generates a coaction on the associated Cuntz-Pimsner algebra. Kaliszewski and Quigg then used this machinery to recover for Katsura's topological graphs what had been discovered for directed graphs \cite{tgcoact}. The skew product topological graph by a (locally compact) group is isomorphic to the crossed product by a naturally occuring coaction, producing this time a coaction that you can ``see'' in the continuous setting.  

Around the time that Katsura studied his topological graphs, Muhly and Solel introduced topological quivers to exemplify traits in the study of Morita equivalence of tensor algebras. Muhly and Tomforde later took up the task of investigating the associated \cs-algebras in \cite{quiver}. Apart from the matter of convention (and we conform to the conventions of Katsura here), the principle difference between quivers and topological graphs is that topological graphs insist that the source map is a local homeomorphism, while quivers insist only that the source map is open. To make up for this freedom, we insist that the fibres over any vertex can be appropriately measured, and the authors follow the strategy used for measuring groupoids by a system of measures. In this article, we take up the study of skew products for topological quivers. 

In \S\ref{prelim}, we record our conventions for bundles, topological quivers and associated constructions, and coactions. Starting with \S\ref{skew} we introduce the skew product quiver and produce a Gross-Tucker type classification for skew products. In \S\ref{algebra} we study the associated \cs-algebras in earnest, culminating in the main objective Theorem \ref{isomorphism}. We end with a section on applications, where we offer an open problem regarding qualities of the coaction, and show that the reduced crossed product by the dual action is Morita equivalent to the \cs-algebra of the original quiver.  

\subsection*{Acknowledgements} The author would like to thank John Quigg for numerous discussions and insights regarding this work. Without his patient encouragement and diligent guidance, this work would be far inferior. 

\section{Preliminaries}\label{prelim}

\subsection{Bundle Constructions}

In the development of bundles, we follow \cite{husemoller}, but we define principal bundles accoring to the hypothesis of \cite{RaeWil}*{Lemma 4.63}. The remark following the lemma reconciles the disparity in definitions.  


\begin{definition}
A \emph{bundle} is a triple $(E, p, B)$, where $p:E\to B$ is a continuous function. $B$ is called the base space, $E$ the total space, $p$ the bundle projection, and for each $b\in B$, the space $p^{-1}(b)$ is called the fibre over $b$. We often identify a bundle with its bundle projection. 

A bundle morphism is a pair of continuous maps $(u, f): (E, p, B)\to (E', p', B')$ with $p'u=fp$, i.e. the diagram below commutes. 
\begin{equation*}
\begin{tikzcd}
E\ar[swap, "p"]{d}\ar["u"]{r} & E'\ar["p'"]{d}\\
B\ar["f", swap]{r} & B'.\\
\end{tikzcd}
\end{equation*}
An bundle \emph{isomorphism} is an invertible bundle morphism (which requires each map $u, f$ to be a homeomorphism). 
\end{definition}
\begin{ex}
Given a bundle $(E, p, B)$ and a continuous map $f: C \to B$, the space 
\[E*_f C=\{(x, c)\in E\times C | p(x) = f(c)\}\]
 is called the pullback under $f$, and the triple $(E*_f C, \pr_2, C)$ is a bundle, where $\pr_2$ is the standard projection onto the second factor. The pair $(\pr_2, f)$ is then a bundle morphism. 
\end{ex}
\begin{ex}
A topological group $G$ acts on on the right a locally compact Hausdorff space $X$ provided there is a continuous group anti-homomorphism $\alpha:G\to \text{Homeo}(X)$ (written $\alpha(g)(x)=xg$), and we call such a space a right $G$-space. $G$ acts \emph{freely} on $X$ provided all stabilizers are trivial.  $G$ acts \emph{properly} on $X$ if the translation map $(x, g)\mapsto (x, xg)$ is a proper map. The action is called \emph{principal} if $G$ acts both freely and properly. If $G$ acts on the right of $X$, then the triple $(X, q, X/ G)$ is a bundle, where $X/ G$ (the set of orbits of $G$) is given the quotient topology induced by the quotient map $q$. This is called the quotient bundle for the $G$ action. When $G$ acts principally on $X$, then the space $X/ G$ is also locally compact and Hausdorff.  

For $g\in G$, the pair $(\rt_g, \id)$ is a bundle morphism of $q$, where $\rt_g(x) = xg$ is right translation by $g$, and this morphism is invertible.  
\end{ex}
\begin{definition}
Given a locally compact group $G$, a bundle $p$ is a \emph{$G$-bundle} provided the total space $E$ is a $G$-space and there is a bundle isomorphism $(\id,f): (E, p, B)\to (E, q, E\backslash G)$. A $G$-bundle $p$ is called \emph{principal} if $G$ acts freely and properly on the total space. In this case, the fibre $p^{-1}(b)$ over any point is isomorphic to $G$. 

A \emph{$G$-bundle morphism} is a bundle morphism $(u, f):(E, p, B)\to (E', p', B')$ whose map of total spaces is equivariant for the $G$-action, that is $u(xg) = u(x)g$.  
\end{definition}

\begin{ex}
If $(E, p, B)$ is a $G$-bundle and $f:C\to B$ is continuous, then there is a $G$-action on $C*_f E$ given by $(c, e)g = (c, eg)$ making the pullback into a $G$-bundle over $C$. The pullback bundle inherits any structure enjoyed by the original bundle. 
\end{ex}

The following theorem will be foundational for us. For details, see \cite[Theorem 4.4.2]{husemoller} and the discussion surrounding \cite[Proposition 1.3.4]{palais}. This result also holds for groupoids, see \cite{hkqwstab} and the references therein.


 \begin{thm}\label{palais}
Suppose we have principal $G$-bundles $p:E\to B$ and $p':E'\to B'$, together with a $G$-bundle morphism $(u, f):(E, p, B)\to (E', p', B')$. Then the assignment
\begin{align*}
E&\to B*_f E'\\
e&\mapsto (p(e), u(e)),
\end{align*} 
is a $G$-equivariant homeomorphism between $E'$ and $B*_f E'$. In particular, this map produces a $G$-bundle isomorphism from $E$ to $B*_f E'$ over $B$.
 \end{thm}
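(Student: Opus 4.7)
The plan is to show that the map $\Phi: E \to B *_f E'$, $\Phi(e) = (p(e), u(e))$, is a $G$-equivariant continuous bijection, and then to upgrade it to a homeomorphism using properness of the $G$-actions.

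First I would verify the easy structural properties. The image lies in $B *_f E'$ because $p'(u(e)) = f(p(e))$ by the bundle-morphism identity $p'u = fp$; continuity is immediate from continuity of $p$ and $u$; and $G$-equivariance follows from $p(eg) = p(e)$ (the action preserves fibres of $p$) together with $u(eg) = u(e)g$ (equivariance of $u$), so that under the pullback action $(b, e')\cdot g = (b, e'g)$ we have $\Phi(eg) = \Phi(e)\cdot g$. Since $\pr_1 \circ \Phi = p$, this also shows $\Phi$ is a bundle map over the identity on $B$, which is the content of the ``in particular'' clause once $\Phi$ is known to be a homeomorphism.

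Next, I would check bijectivity fibrewise. The fibre $p^{-1}(b)$ is a $G$-torsor because $p$ is principal, and the fibre $\pr_1^{-1}(b) = \{b\} \times (p')^{-1}(f(b))$ is also a $G$-torsor (the pullback of a principal bundle is again principal, as recorded in the example preceding the theorem). Any $G$-equivariant map between $G$-torsors is automatically a bijection, so $\Phi$ is bijective on each fibre and hence globally bijective.

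The main obstacle is continuity of $\Phi^{-1}$. The Palais-style definition adopted in the paper requires only a free and proper action, not local triviality, so one cannot simply invoke trivializing sections. However, properness of the $G$-actions on both $E$ and $B *_f E'$ (the latter inherited via the pullback construction) makes the orbit maps open onto $B$, and Palais's slice theorem supplies a local model near each point on which $\Phi$ is visibly a homeomorphism. I would close the argument by citing Husemoller's Theorem 4.4.2 or Palais's Proposition 1.3.4, each of which records exactly the general principle that a $G$-equivariant bundle map between principal $G$-bundles covering the identity on the base is an isomorphism, rather than reproducing the slice-theoretic technicalities.
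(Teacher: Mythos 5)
Your proposal is correct and matches the paper's treatment: the paper offers no proof of this theorem beyond deferring to Husemoller's Theorem 4.4.2 and Palais's Proposition 1.3.4, which is exactly where you send the one genuinely nontrivial step (continuity of the inverse). Your added verifications of well-definedness, equivariance, and fibrewise bijectivity via the torsor argument are all sound and only make explicit what the paper leaves to the references.
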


\begin{definition}[\cite{wil:toolkit}*{\S 3.2}]
Given a bundle $\pi: E\to B$, a system of measures over $\pi$, or \emph{$\pi$-system}, is a family of positive Radon measures $\lambda = \{\lambda_b: b\in B\}$ defined on $E$ and satisfying two conditions
\begin{enumerate}
\item the support of $\lambda_b$ is a subset of the fibre over $b$, and 
\item\label{cont-assembly} for any $f\in C_c(E)$, the assignment \[b\mapsto \lambda(f)(b) := \int f d\lambda_b\] is a continuous function.
\end{enumerate}
Condition 2 is called continuity for the system. A $\pi$-system is called \emph{full} provided $\text{supp}(\lambda_b) = \pi^{-1}(b)$, and \emph{proper} provided $\lambda_b\neq 0$ for every $b\in B$. We will confine our attention to full and proper $\pi$-systems.  
\end{definition}
Suppose the spaces $E, B$ admit (right) actions by a group $G$ and $\pi$ is equivariant for the action. A $\pi$-system is called \emph{equivariant} provided 
\[\int_E f(eg)d\lambda_b(e) = \int_E f(e)d\lambda_{bg}(e)\] 
for every $f\in C_c(E)$, $b\in B$, and $g\in G$. 

If $G$ acts principally on each of $E, B$ and if $\pi$ is equivariant and surjective, then there is a canonical map $\dot{\pi}$ making the diagram 
\begin{equation*}
\begin{tikzcd}
E\ar["q_E", swap]{d}\ar["\pi",]{r} & B\ar["q_B"]{d}\\
E\backslash G\ar["\dot{\pi}", swap]{r} & B\backslash G
\end{tikzcd}
\end{equation*}
commute. The orbit map for $E$ induces a homeomorphism $\pi^{-1}(b)\to \dot{\pi}^{-1}(bG)$, so that equivariance of $\lambda$ induces for each orbit $bG\in B\backslash G$ a measure $\mu_{bG}$ with support $\dot{\pi}(bG)$. For $\phi\in C_c(E\backslash G)$, this measure is given by 
\[\int \phi d\mu_{bG}.\]

In this circumstance, we have the following theorem relating $G$- equivariant $\pi$-systems to those of their orbits.

\begin{thm}[\cite{wil:toolkit}*{Proposition 3.14}]\label{orbitsystems}
Let $G$ be a group, $E$ and  $B$ principal right $G$-spaces, and $\pi: E\to B$ a continuous open surjection. Let $\dot{\pi}$ be the induced map for the orbit spaces of $E$ and $B$. 
\begin{enumerate}[(a)]
\item If $\lambda$ is an invariant $\pi$-system, then the induced family of measures $\mu=\{\mu_{bG}\}_{bG\in B\backslash G}$ is a $\dot{\pi}$-system.
\item If $\mu$ is a $\dot\pi$-system, then there is a $\pi$-system $\lambda$ whose induced $\dot\pi$-system agrees with $\mu$. 
\end{enumerate}
\end{thm}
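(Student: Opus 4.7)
The plan is to build parts (a) and (b) from a common observation: since $G$ acts freely on $E$, the restriction of the orbit map $q_E$ to any fiber is a homeomorphism $q_E|_{\pi^{-1}(b)}\colon \pi^{-1}(b)\to \dot{\pi}^{-1}(bG)$. So for (a) I would define $\mu_{bG}$ to be the push-forward of $\lambda_b$ along this homeomorphism (viewed as a Radon measure on $E\backslash G$ supported in the closed set $\dot{\pi}^{-1}(bG)$), and for (b) I would reverse the construction, pulling $\mu_{bG}$ back along $q_E|_{\pi^{-1}(b)}$ and extending by zero to obtain a candidate $\lambda_b$ on $E$. Since the family $\{\pi^{-1}(b)\}$ covers $E$ disjointly, this does give a family of Radon measures with the right support.

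For part (a), the first task is well-definedness of $\mu_{bG}$: if $b'=bg$, then right-translation by $g$ is a homeomorphism $\pi^{-1}(b)\to\pi^{-1}(b')$ which, by equivariance of $\lambda$, carries $\lambda_b$ to $\lambda_{b'}$, and composing with $q_E$ (which is $G$-invariant on orbits) shows both push-forwards yield the same Radon measure on $\dot{\pi}^{-1}(bG)$. Fullness $\supp(\mu_{bG})=\dot{\pi}^{-1}(bG)$ is automatic from the homeomorphism and fullness of $\lambda$. The substantive point is continuity (condition (2) of the definition): for $\phi\in C_c(E\backslash G)$, we need $bG\mapsto \mu_{bG}(\phi)$ continuous on $B\backslash G$, and by the universal property of the quotient topology it suffices to prove continuity of $b\mapsto \mu_{bG}(\phi)$ on $B$. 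Here I would invoke a Bruhat-type approximate cross-section, namely a cutoff $c\in C_c^+(E)$ with $\int_G c(eg)\,dg\equiv 1$ on a neighborhood of $q_E^{-1}(\supp\phi)$. Setting $f(e):=c(e)\,\phi(q_E(e))\in C_c(E)$, a Fubini-and-equivariance computation produces an identity of the form
\[
\mu_{bG}(\phi)=\int_G \lambda_{bg}(f)\,dg,
\]
whose right-hand side is continuous in $b$ because $b\mapsto \lambda(f)(b)$ is a continuous function on $B$ and the integrand has compact $g$-support uniformly on compacta.

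For part (b), the pulled-back measure $\lambda_b$ is automatically Radon on $\pi^{-1}(b)$ with $\supp(\lambda_b)=\pi^{-1}(b)$. Equivariance $\lambda_b(f(\cdot g))=\lambda_{bg}(f)$ is a direct unwinding of how the homeomorphisms $q_E|_{\pi^{-1}(b)}$ and $q_E|_{\pi^{-1}(bg)}$ are intertwined by right-translation by $g$, together with the fact that $\mu_{bG}$ is a single measure attached to the common orbit $bG=bgG$. Continuity is again the hard point: for $f\in C_c(E)$ I would choose a Bruhat cutoff $c$ as above and form the $G$-average $h(eG):=\int_G (cf)(eg)\,dg$, which lies in $C_c(E\backslash G)$. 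Using the pullback definition of $\lambda_b$ on each fiber and the (already established) equivariance, one computes $\mu_{bG}(h)=\int_G \lambda_{bg}(cf)\,dg$; combining this with a local trivialization of the principal bundle near $b$, together with the continuity of the $\dot\pi$-system $\mu$, yields the continuity of $b\mapsto\lambda_b(f)$. Finally, running the procedure of (a) on the constructed $\lambda$ returns $\mu$, because push-forward and pull-back along the fiber-wise homeomorphisms are mutually inverse.

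The main obstacle in both directions is the continuity step. It is not formal, because a Radon measure on a fiber does not a priori know about the ambient topology; the Bruhat approximate cross-section is what bridges integration against $\lambda_b$ on the fiber $\pi^{-1}(b)$ and integration against $\mu_{bG}$ on its image, while keeping the dependence on $b$ visibly continuous. Getting the cutoff to appear only through a $G$-average (rather than as an unabsorbed factor) is what makes the resulting expressions genuine continuous functions of the base parameter.
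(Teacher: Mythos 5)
The paper does not actually prove this statement: it is imported verbatim from Williams' toolkit (\cite{wil:toolkit}*{Proposition 3.14}), with only the observation that $q_E$ restricts to a homeomorphism $\pi^{-1}(b)\to\dot\pi^{-1}(bG)$ recorded beforehand. So there is no in-paper proof to compare against, and I assess your argument on its own terms. Your part (a) is essentially correct: the pushforward definition, well-definedness via equivariance of $\lambda$ and $G$-invariance of $q_E$, and the cutoff identity $\mu_{bG}(\phi)=\int_G\lambda_{bg}\bigl(c\cdot(\phi\circ q_E)\bigr)\,dg$ all check out; the Fubini interchange is justified by properness, $\lambda(c\cdot(\phi\circ q_E))$ lies in $C_c(B)$, and the $G$-average of a $C_c$ function under a proper action is continuous.

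The genuine gap is the continuity step in part (b). Your identity $\mu_{bG}(h)=\int_G\lambda_{bg}(cf)\,dg$ controls only the orbit average of $b\mapsto\lambda_b(f)$, and to recover the value at the single point $b$ you appeal to ``a local trivialization of the principal bundle near $b$.'' Under the stated hypotheses this is not available: the paper's notion of principal $G$-space, following Raeburn--Williams, is just ``free and proper,'' and orbit maps of free and proper actions of general locally compact groups need not admit local sections; moreover the theorem is invoked in Proposition \ref{oq} precisely where no triviality is assumed. Even granting a local section, the passage from the averaged identity to pointwise continuity is not written down. The repair that stays within the paper's toolkit is to replace the local trivialization by the pullback identification of Theorem \ref{palais}: the map $e\mapsto(q_E(e),\pi(e))$ is a homeomorphism $E\cong(E\backslash G)*_{\dot\pi}B$, under which your fibrewise pullback $\lambda_b$ becomes $\mu_{bG}\times\delta_b$ on $\dot\pi^{-1}(bG)\times\{b\}$, so that $\lambda_b(f)=\int\tilde f(x,b)\,d\mu_{bG}(x)$ for a compactly supported extension $\tilde f$ of the transported $f$ to $(E\backslash G)\times B$. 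Continuity of $b\mapsto\lambda_b(f)$ then follows from the same two-term estimate the paper uses to show the skew-product system $\lambda_\kappa$ is continuous: one term is handled by continuity of the $\dot\pi$-system applied to the fixed function $\tilde f(\cdot,b)$, the other by uniform convergence $\tilde f(\cdot,b_i)\to\tilde f(\cdot,b)$ (the analogue of Lemma \ref{uniform}) together with a uniform bound on $\mu_{b_iG}$ of a fixed cutoff dominating the supports. With that substitution your architecture for both directions is sound.
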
 


\subsection*{ \textbf{\cs}-Constructions for Topological Quivers}

We refer to \cite{quiver} for topological quivers, but adapt the definitions to the conventions of \cite{rae:graphalg, kat:class}, which we use along with \cite{enchilada}.

\begin{definition}
A \textit{Topological Quiver} is an ensemble $\mathcal{Q}=\{E^0,E^1, r, s, \lambda\}$ comprising the following data:
\begin{itemize}
\item $E^1$ and $E^0$ are locally compact Hausdorff spaces.
\item $r$ and $s$ are continuous from $E^1$ to $E^0$.
\item $s$ is open.
\item $\lambda$ is a full $s$-system
\end{itemize}
\end{definition}
\begin{remarks}
$r$ and $s$ are called the range map and source map, elements of $E^1$ are typically called edges, and elements of $E^0$ vertices. These names are anticipated by the analogy of quivers with directed graphs \cite{rae:graphalg}, which are essentially quivers whose edge and vertex spaces are discrete and countable, and the family $\lambda$ (typically suppressed in this context) consists of  counting measures. A vertex which is in the image of the source map but not the range map is called a source, and a vertex which is in the image of the range map but not the source map is called a sink. Using the technique of adding tails to quivers (cf. \cite[Section 4]{quiver}), one can always restrict their attention to quivers without sinks. We will often briefly call a topological quiver a quiver.
\end{remarks}

Given a topological quiver $\mathcal{Q}$, \cite{quiver} defines  a quiver $C^*$-correspondence as follows: Let $A=C_0(E^0)$. For $X_0=C_c(E^1)$, there is an $A$-valued inner product $\langle\cdot,\cdot\rangle:X_0\times X_0\to A$ by \[\langle\eta,\xi\rangle(v) = \int \overline{\eta(e)}\xi(e)\,d\lambda_v(e).\]
We then get left and right actions of $f\in A$ on $\xi \in X_0$ by 
\begin{align*} 
f\xi(e) &= f(r(e))\xi(e) \text{ and}\\
 \xi f(e) &= \xi(e)f(s(e)),
 \end{align*} 
 which encode in the module $X_0$ the structure of the topological quiver. Denoting by $X$ the completion of $X_0$ with respect to this inner product, $X$ is an $A-A$ correspondence. In fact, for $A'=C_0(E^1)$, $X$ is also an $A'-A$ correspondence with the left action given by pointwise multiplication. Notice that, whether we regard $X$ as an $A-A$ correspondence or an $A'-A$ correspondence, $X$ is nondegenerate, by which we mean that $\overline{\text{span}}(A\cdot X) = \overline{\text{span}}(A'\cdot X) = X$. In fact, the Cohen-Hewitt Factorization Theorem guarantees that $A\cdot X= X$, so authors will often take this to be the definition of nondegeneracy for \cs-correspondences. 
  
 \begin{remark}
When we declare a left action of a \cs-algebra $A$ on $X$, what we really mean is a homomorphism $\phi_A: A\to \mathcal{L}(X)$. In particular, that the left action of $A'$ on $X$ is implimented by pointwise multiplication, and the left action by $A$ is induced by the range map by assigning $f\mapsto f\circ r$ and then acting by pointwise multiplication. In our context, this homomorphism $\phi_A$ is faithful when the set of sources has empty interior. 
 \end{remark}

 Given the \cs-correspondence of a topological quiver, we are interested in constructing a \cs-algebra which reflects the quiver's data. We will also find value in representing the correspondence of one quiver in the correspondence of another quiver in a way which preserves the ``essential'' structure of interest to us. In what follows, recall that for an $A-A$ correspondence $X$, the multiplier correspondence is $M(X)=\mathcal{L}_A(A,X),$ the space of all adjointable $A$-linear operators from $A$ (regarded as the standard correspondence over itself) into $X$. $M(X)$ is an $M(A)-M(A)$ correspondence. Given a pair $(X, A)$, $(Y,B)$ of correspondences, a \emph{morphism} of correspondences is a pair $(\psi, \pi):(X, A)\to (M(Y),M(B))$ of maps consisting of a homomorphism $\pi:A\to M(B)$ and a linear map $\psi:X\to M(Y)$ which preserves the correspondence operations. A \emph{representation} of $(X, A)$ in a \cs-algebra $D$ is a correspondence homomorphism $(\psi, \pi):(X,A)\to (M(D), M(D))$ into the multiplier of the standard correspondence associated to $D$. We are interested in a particular class of correspondence homomorphisms.

\begin{definition}[\cite{kqrfunctor}*{Definition 3.1}]\label{CPcovcor}
A correspondence homomorphism is \emph{Cuntz-Pimsner covariant} provided 
\begin{enumerate}[(i)]
\item $\psi(X)\subset M_B(Y)$
\item $\pi:A\to M(B)$ is nondegenerate
\item $\pi(J_X)\subset M(B; J_Y)$
\item The following diagram commutes 
\end{enumerate}
\begin{equation*}
\begin{tikzcd}
J_X\ar["\pi|"]{r}\ar["\phi_A|", swap]{d} & M(B; J_Y)\ar["\overline{\phi_B}|"]{d}\\
\bK(X)\ar["\psi^{(1)}", swap]{r} & M_B(\bK(Y)).\\
\end{tikzcd}
\end{equation*}
\end{definition}

\begin{remark}
Above, $J_X$ is the largest ideal mapping $A$ injectively into $\bK(X)$ (likewise for $J_Y$), $\psi^{(1)}$ is the naturally occurring map from $\bK(X)$ to $M(\bK(Y))$, and 
\[
M(B; J_Y):=\{m\in M(B)|\, mB\cup Bm\subset J_Y\}.
\] 

For topological quivers, \cite{quiver}*{Proposition 3.15} provides a satisfactory description of $J_X$ in terms of the \emph{regular verticies} of the topological quiver, themselves defined in \cite{quiver}*{Definition 3.14}. The regular verticies are characterized as 
\[
E^0_{\text{reg}}=E^0_{\text{fin}}\cap \text{Int}(\overline{r(E^1)}),
\]
where
\begin{align*}
E^0_{\text{fin}} = \{v\in E^0 : \text{there exists a precompact neighborhood $V$ of $v$ such that } r^{-1}(\overline{V}) \\
\text{is compact and } s|_{r^{-1}(V)} \text{ is a local homeomorphism}\}.\
\end{align*}
\end{remark}

According to definition \ref{CPcovcor}, \cite{kqrfunctor}*{Theorem 3.7} defines a functor from the category of \cs-correspondences with Cuntz-Pimsner covariant homomorphisms to the category of \cs-algebras with nondegenerate homomorphisms. Through the remainder of this article, we denote by $(X,A)$ the $A-A$ correspondence associated to $\Q$ and $(\cs(\Q), k_X, k_A)$ the universal Cuntz-Pimsner covariant representation of $(X,A)$. This universal \cs-algebra is the definition of the \cs-algebra associated to $\Q$.

\subsection*{Coaction Constructions}

For a detailed exposition on coactions, one can consult the appendix to \cite{enchilada}. Antecedents for this article are due to \cite{kqrcoact}.

\begin{definition}
Given a locally compact group $G$ and a \cs-algebra $A$, a coaction $(A,\delta)$ of $G$ is an injective nondegenerate homomorphism 
\[\delta:A\to M(A\otimes \cs (G))\] which satisfies the \emph{coassociative condition,}
\[(\delta\otimes\id)\circ\delta = (\id\otimes\delta_G)\circ\delta\] together with \emph{coaction nondegeneracy}
\[\overline{\text{span}}\{\delta(A)\cdot (1\otimes \cs (G))\} = A\otimes\cs (G).\]
\end{definition}

\begin{remark}
 The theory of coactions makes emphatic use of the group \cs-algebra together with tensor products, and these structures are deeply bound to the theory. Consider, for example, that for a locally compact group $G$, we can associate with the canonical homomorphism $\iota: G\hookrightarrow M(\cs (G))$ a distinguished unitary $w_G\in UM(C_0(G)\otimes\cs (G))$ as follows. We identify $M(C_0(G)\otimes \cs (G))$ with $C_b(G, M^\beta(\cs (G) ))$, the bounded continuous functions from $G$ into $M(\cs (G) )$ endowed with the strict topology, and then define $w_G$ by $w_G(g)=\iota(g)\in M(\cs (G) )$. This unitary casts the embedding $\iota$ into the theory of tensor products, which allows us to describe covariant representations of coactions in what follows.

 One special example of a coaction is implimented by a homomorphism $\psi:C_0(G)\to M(A)$, according to the assignment 
 \begin{align*}
 \delta^\psi:A&\to M(A\otimes \cs (G) )\\
  a&\mapsto  \text{Ad}(\psi\otimes\id(w_G))(a\otimes 1)\\
  \end{align*} 
  and such a coaction is termed \emph{inner}. Much as in the theory of actions, we define a covariant representation of $(A, \delta)$ as a pair of nondegenerate homomorphisms $(\pi, \mu)$ such that the diagram
 \begin{equation*}
\begin{tikzcd}
&A\ar["\delta"]{r}\ar["\pi", swap]{d} & M(A\otimes \cs (G) )\ar["\pi\otimes \id"]{d}\\
C_0(G)\ar["\mu",swap]{r}&M(B)\ar["\delta^\mu", swap]{r} & M(B\otimes \cs (G) )\\
\end{tikzcd}
 \end{equation*}
 commutes. Loosely speaking, a covariant representation provides a setting where the coaction $\delta$ as an inner coaction.
\end{remark}

\begin{remark}
In the full generality of locally compact groups, it is necessary for us to work with multiplier algebras frequently. The use of multiplier algebras affords us the power to consider strict extentions of homomorphisms, which under nondegeneracy conditions are unique. There are natural questions about the compatibility of strict extentions under composition, which in practice cause us no concern. Consequently, by abuse of notation we commonly will identify a homomorphism with its strict extension. One can follow \cite{boiler}*{appendix} for more details on the general theory.  
\end{remark}

\begin{definition}
A crossed product for a coaction is a \cs-algebra $A\rtimes_\delta G$ together with a covariant representation $(j_A, j_G): (A,\delta)\to M(A\rtimes_\delta G)$ that is universal for covariant representations. That is, if $(\pi, \mu): (A,\delta)\to M(B)$ is any covariant representation of the coaction, there is a unique nondegenerate homomorphism $\pi\times\mu: A\rtimes_\delta G\to M(B)$ with the property that 
\begin{align*}
\pi=(\pi\times\mu)\circ j_A\\
\mu=(\pi\times\mu)\circ j_G.\\
\end{align*}
 $\pi\times \mu$ is called the integrated form of the representation $(\pi, \mu)$. 
\end{definition}

\begin{remarks}
Let $(A, \delta, G)$ be a coaction, and consider the multiplication representation $M:C_0(G)\to M(\bK(L^2 (G) ))$. Then one covariant representation for $(A,\delta)$ is implimeneted by the pair 
\[
((\id\otimes\lambda)\circ\delta, 1\otimes M):(A,\delta)\to M(A\otimes \bK(L^2(G)))
\]
which is called the \emph{regular} representation of the coaction. The integrated form of this representation is always faithful, indicating a departure from the theory of actions on \cs-algebras. Instead, the notion of ``intermediate'' completions for \cs-dynamical systems is recovered in this dual category by different classes of coactions. We will shortly introduce the two most important classes.

Fix $s\in G$. Another example of a representation for $(A,\delta)$ is given by the pair $(j_A, j_G\circ \rt_s):(A,\delta)\to M(A\times_\delta G)$, which is an automorphism of $A\times_\delta G$. The family of integrated forms indexed by the elements of $G$ produces an action of $G$ on the crossed product, called the \emph{dual} action to $\delta$, and denoted by $\hat\delta$. 
\end{remarks}

\begin{definition}[cf. \cite{boiler}, \cite{maximal}]

A coaction $(A,\delta)$ is called \emph{normal} provided the canonical homomorphism $j_A:A\to A\times_\delta G$ is faithful. 

A coaction $(A, \delta)$ is called \emph{maximal} provided there is an isomorphism 
\[
A\times_\delta G\times_{\hat{\delta}} G \cong A\otimes \bK(L^2(G))
\] 
given by a certain canonical map. 
\end{definition} 

A given coaction $(A,\delta)$ always has both a \emph{normalization} $(A^n,\delta^n)$ and a \emph{maximalization} $(A^m, \delta^m)$, and there are natural surjections $A\to A^n$ and $A^m\to A$. It is possible for a coaction to be both maximal and normal. 
This has strong implications for the dual action, as we see below. 

\begin{prop}\label{amenableaction}
Suppose $(A, \delta)$ is any coaction which is both normal and maximal. Then the dual action $\hat{\delta}$ is amenable in the sense that the regular representation
\[A\times_\delta G\times_{\hat{\delta}} G \to A\times_\delta G\times_{\hat{\delta}, r} G\]
is an isomorphism.
\end{prop}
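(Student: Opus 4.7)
My plan is to show that the regular representation $\Lambda:A\rtimes_\delta G\rtimes_{\hat\delta}G\to A\rtimes_\delta G\rtimes_{\hat\delta,r}G$ factors a known isomorphism coming from maximality, with the cofactor being itself an isomorphism because of normality. To that end, I would first recall from the theory of coactions (the appendix of \cite{enchilada} and \cite{maximal}) that for \emph{any} coaction $(A,\delta)$ there is a canonical surjection
\[
\Phi:A\rtimes_\delta G\rtimes_{\hat\delta} G\longrightarrow A\otimes \bK(L^2(G)),
\]
built from the standard covariant pair $((\id\otimes\lambda)\circ\delta,\,1\otimes M)$ and the canonical embedding $j_G:C_0(G)\to M(A\rtimes_\delta G)$; and by the very definition above, maximality of $(A,\delta)$ is the statement that $\Phi$ is an isomorphism.

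Next, I would invoke Katayama duality in the form proved for normal coactions in \cite{boiler} (see also \cite{maximal}): normality of $(A,\delta)$ is equivalent to the assertion that the analogous canonical map
\[
\Phi_r:A\rtimes_\delta G\rtimes_{\hat\delta,r} G\longrightarrow A\otimes \bK(L^2(G))
\]
into the reduced double crossed product is an isomorphism. The point is that $\Phi_r$ is constructed from the \emph{same} covariant pair that implements $\Phi$, so by the universal property of the full dual crossed product we have the factorization
\begin{equation*}
\Phi \;=\; \Phi_r\circ \Lambda.
\end{equation*}

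Assuming both normality and maximality, both $\Phi$ and $\Phi_r$ are isomorphisms, and the displayed identity lets us solve
\[
\Lambda \;=\; \Phi_r^{-1}\circ\Phi,
\]
exhibiting $\Lambda$ as a composition of two isomorphisms, hence an isomorphism itself. This is exactly the assertion that $\hat\delta$ is amenable in the stated sense.

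The only potentially delicate step is the compatibility identity $\Phi=\Phi_r\circ\Lambda$. This is not really a computation one needs to grind through in the proof; it is a naturality statement for the canonical Imai--Takai/Katayama maps with respect to the regular representation, and it is essentially built into the constructions in \cite{boiler,enchilada}. I would simply cite it. Thus the real content of the proposition is the observation that maximality gives an isomorphism on the full side, normality gives one on the reduced side, and the regular representation is, up to these two canonical identifications, the identity on $A\otimes\bK(L^2(G))$.
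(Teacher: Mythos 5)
Your proposal is correct and follows essentially the same route as the paper: the paper cites the commutative diagram of \cite{maximal}*{Proposition 2.2}, in which your $\Phi_r$ appears as the isomorphism $\Upsilon$ onto $A^n\otimes\bK(L^2(G))$ (with $A^n=A$ by normality) and your compatibility identity $\Phi=\Phi_r\circ\Lambda$ is exactly the commutativity of that square, so that $\Lambda$ is again exhibited as a composition of isomorphisms. The only cosmetic difference is that you assemble the diagram from maximality, Katayama duality for normal coactions, and naturality, whereas the paper imports it wholesale from a single citation.
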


\begin{proof}

This is an immediate consequence of \cite{maximal}*{Proposition 2.2}, which the authors illustrate pictorally  with the commutative diagram 
\begin{center}
\begin{tikzcd}
A\times_\delta G\times_{\hat{\delta}} G\ar["\Phi"]{r} \ar["\Lambda", swap]{d} & A\otimes \bK(L^2(G))\ar["\psi\otimes\id"]{d}\ar["\Psi"]{dl}\\
A\times_\delta G\times_{\hat{\delta}, r} G\ar["\Upsilon", swap]{r} & A^n\otimes \bK(L^2(G)).\\
\end{tikzcd}
\end{center}

In the diagram, $\Lambda$ is the regular representation for the crossed product, $\Phi$ is the canonical surjection, and $\psi$ is the quotient map $\psi:A\to A/\ker{j_A}$ associated to the universal map $j_A:A\to A\times_\delta G$. Their proof furnishes the isomorphism $\Upsilon$ and the surjection $\Psi$. 

Since $\delta$ is a maximal coaction, the map $\Phi$ is injective, and since $\delta$ is normal, $\ker{j_A}=\{0\}$ and $\psi$ is the identity. It now follows that 
\[
\Lambda = \Psi\circ\Phi = \Upsilon^{-1}\circ (\psi\otimes\id)\circ\Phi,
\]
a composition of isomorphisms.
\end{proof}

Finally, we will make use of the notion of coactions for \cs-correspondences. The relevant definition is supplied below. 

\begin{definition}
Let $X$ be an $A-B$ correspondence, $G$ a locally compact group, and $\delta, \epsilon$ coactions of $G$ on $A$ and $B$ respectively. A coaction of $G$ on $X$ is a nondegenerate correspondence homomorphism \[\sigma:X\to M(X\otimes C^*(G))\] which has $\delta$ and $\epsilon$ as coefficient maps, and such that 
\begin{enumerate}[(i)]
	\item $\overline{\text{span}}((1_{M(A)}\otimes C^*(G))\sigma(X)) = X\otimes C^*(G)$
	\item $(\sigma\otimes\text{id}_G)\circ \sigma = (\text{id}_X\otimes \delta_G)\circ \sigma$.
\end{enumerate} 
\end{definition}

\section{Skew Product Quivers}\label{skew}

\subsection{Definition of the Skew Product Quiver}
We wish to define a skew product quiver in analogy with the skew product graphs and skew product topological graphs studied to date \cite{ kqr:graph, tgcoact}. So let $G$ be a locally compact group, and let $\kappa:E^1\to G$ be a continuous map, called a \textit{cocycle} of $G$ on $\mathcal{Q}$. Then if $F^i=E^i\times G$ with continuous range map $r_\kappa(e,g)=(r(e),\kappa(e)g)$ and continuous, source map $s_\kappa(e, g)=(s(e),g)$, we have most of the data required for a topological quiver. All that remains is to find a family of measures on $F^1$ satisfying our conditions above. Since $s^{-1}(v,g)= s^{-1}(v)\times \{g\}$, we define $\lambda_\kappa=\{\lambda_{v,g}\}_{(v,g)\in F^0}$ by $\lambda_{v,g}(E)=\lambda_v(\{e\in E^1| (e,g)\in E\})$. First, we handle a technical lemma. 

\begin{lem}\label{uniform}
Let $\xi\in C_c(E^1\times G)$, and $(g_i)_{i\in I}$ a convergent net in $G$ with limit $g$. Then the net of functions $(\xi(\cdot, g_i))_{i\in I}$ converges uniformly to $\xi(\cdot, g)$. 
\end{lem}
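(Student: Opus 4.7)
The plan is to reinterpret the statement as continuity of the map $\Phi\colon G\to C_0(E^1)$ given by $\Phi(h)=\xi(\cdot,h)$ (with respect to the sup-norm on $C_0(E^1)$), since uniform convergence of $\xi(\cdot,g_i)$ to $\xi(\cdot,g)$ is precisely $\|\Phi(g_i)-\Phi(g)\|_\infty\to 0$. Net convergence along $(g_i)$ then follows from continuity of $\Phi$ at $g$, so it suffices to show the latter.

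The key ingredient is the compact support of $\xi$. Let $K=\supp\xi\subset E^1\times G$, and set $K_1=\pr_1(K)$, a compact subset of $E^1$. Fix $\epsilon>0$. For each $e\in K_1$, apply joint continuity of $\xi$ at $(e,g)$ to produce an open neighborhood $U_e\ni e$ in $E^1$ and an open neighborhood $V_e\ni g$ in $G$ with
\[
|\xi(e',h)-\xi(e,g)|<\epsilon/2\quad\text{for all }(e',h)\in U_e\times V_e.
\]

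Next, extract a finite subcover $U_{e_1},\dots,U_{e_n}$ of $K_1$ and set $V=\bigcap_{k=1}^n V_{e_k}$, an open neighborhood of $g$. For any $h\in V$ and $e\in K_1$, choose $k$ with $e\in U_{e_k}$; then both $|\xi(e,h)-\xi(e_k,g)|<\epsilon/2$ and $|\xi(e,g)-\xi(e_k,g)|<\epsilon/2$ (the latter by taking $h=g\in V_{e_k}$), whence $|\xi(e,h)-\xi(e,g)|<\epsilon$. For $e\notin K_1$, both $\xi(e,h)$ and $\xi(e,g)$ vanish for every $h$, so the bound is trivial. Thus $\|\Phi(h)-\Phi(g)\|_\infty\le\epsilon$ for all $h\in V$, and since $g_i\to g$, eventually $g_i\in V$, giving the desired uniform convergence.

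There is no real obstacle here: the argument is a textbook tube-lemma-style compactness reduction. The only point requiring mild care is accounting for both the points in $K_1$ (where the finite subcover extracts a common neighborhood $V$ of $g$) and the points outside $K_1$ (where both values vanish for free); local compactness of $G$ is not actually needed, only continuity of $\xi$ and compactness of $K_1$.
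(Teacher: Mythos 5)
Your proof is correct, but it takes a different route from the paper's. The paper argues by compactness of a family of functions: it uses the Tube Lemma to produce a precompact neighborhood $\overline{V}$ of $g$, shows that $\{\xi(\cdot,h)\}_{h\in\overline{V}}$ is pointwise bounded and equicontinuous, invokes Ascoli's Theorem to get precompactness in $C_0(E^1)$, and then combines uniform subnet convergence with pointwise convergence to identify the limit. You instead prove the statement directly as continuity of $h\mapsto\xi(\cdot,h)$ into $(C_0(E^1),\|\cdot\|_\infty)$ by a finite-subcover $\epsilon$-estimate over the compact set $K_1=\pr_1(\supp\xi)$, handling points outside $K_1$ by the vanishing of $\xi$ there. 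Your version is more elementary and self-contained: it avoids Ascoli entirely, sidesteps the paper's slightly delicate final step (strictly, one needs that \emph{every} subnet has a uniformly convergent sub-subnet whose limit is forced to be the pointwise limit, not merely that one convergent subnet exists), and, as you note, does not use local compactness of $G$, which the paper implicitly invokes when choosing a precompact open set $W$. The paper's approach, on the other hand, packages the estimate into a reusable compactness statement about the whole family $\{\xi(\cdot,h)\}_{h\in\overline{V}}$, which is slightly more than the lemma asserts. Both arguments are sound; yours is arguably the cleaner proof of the stated claim.
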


\begin{proof}
Certainly, $\xi(\cdot, g_i)$ converges pointwise to $\xi(\cdot, g)$ by the continuity of $\xi$. The claim is that this convergence is uniform. 

Consider the space $K\times L\subset E^1\times G$ defined by $K=\proj_{E^1}(\supp(\xi))$ and $L=\proj_G(\supp(\xi))$, which is a compact subset of the product. Choosing some precompact open set $W$ in $E^1\times G$ containing the slice $K\times \{g\}$, we apply the Tube Lemma \cite{munkres}*{Lemma 26.8} to $W\cap K\times L$ find a product $K\times V$ of open sets with $K\times \{g\}\subset K\times V\subset W$. Writing $U=\proj_{E^1}(W)$, 
the product $U\times V$ is a precompact open set containing $\supp(\xi(\cdot, g))$. 

We now claim that the family of functions $\{\xi(\cdot, g)\}_{g\in \overline{V}}$ satisfies the hypotheses of Ascoli's Theorem. That the family is pointwise bounded is clear, and equicontinuity follows from a standard $\epsilon/3$ argument. So the family of functions is precompact in $C_0(E^1)$. Since the net $(\xi(\cdot, g_i))$ is eventually in the family, there is a convergent subnet for the uniform topology. Since the net converges pointwise to $\xi(\cdot, g)$, the net must converge uniformly to the same limit. 
\end{proof}

\begin{prop}
The ensemble $\mathcal{Q}\times_\kappa G=\{F^0, F^1, r_\kappa, s_\kappa, \lambda_\kappa \}$ defines a topological quiver.
\end{prop}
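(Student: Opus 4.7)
The plan is to verify the four bullet points in the definition of a topological quiver for the ensemble $\Q\sk G$. The first three are essentially automatic from the product structure, so the work is really concentrated in showing that $\lambda_\kappa$ is a full $s_\kappa$-system.

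First I would note that $F^0=E^0\times G$ and $F^1=E^1\times G$ are locally compact Hausdorff as products of such spaces, and that $r_\kappa$ and $s_\kappa$ are continuous since their components are. For openness of $s_\kappa$, I would observe that $s_\kappa=s\times\id_G$ and invoke the fact that a product of open maps is open (or verify directly on basic open rectangles). Next I would dispatch the support and fullness portion of the $s_\kappa$-system condition: by construction $\lambda_{v,g}$ is essentially $\lambda_v$ transported to the slice $E^1\times\{g\}$, so $\supp(\lambda_{v,g})=\supp(\lambda_v)\times\{g\}=s^{-1}(v)\times\{g\}=s_\kappa^{-1}(v,g)$ by fullness of $\lambda$.

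The main work — and the expected obstacle — is continuity of the assignment $(v,g)\mapsto\int\xi\,d\lambda_{v,g}=\int_{E^1}\xi(e,g)\,d\lambda_v(e)$ for $\xi\in C_c(F^1)$. Given a convergent net $(v_i,g_i)\to(v_0,g_0)$, I would use the triangle inequality to split
\[
\left|\int\xi(\cdot,g_i)\,d\lambda_{v_i}-\int\xi(\cdot,g_0)\,d\lambda_{v_0}\right|\le \left|\int(\xi(\cdot,g_i)-\xi(\cdot,g_0))\,d\lambda_{v_i}\right|+\left|\int\xi(\cdot,g_0)\,d\lambda_{v_i}-\int\xi(\cdot,g_0)\,d\lambda_{v_0}\right|.
\]
The second term vanishes in the limit because $\xi(\cdot,g_0)\in C_c(E^1)$ and $\lambda$ is itself an $s$-system. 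For the first term, I would invoke Lemma \ref{uniform} to conclude that $\xi(\cdot,g_i)\to\xi(\cdot,g_0)$ uniformly, and then control $\lambda_{v_i}(K)$ (where $K=\proj_{E^1}(\supp\xi)$) by choosing a bump function $f\in C_c(E^1)$ with $f\ge\chi_K$; the assignment $v\mapsto\int f\,d\lambda_v$ is continuous, hence locally bounded near $v_0$, yielding a uniform bound on $\lambda_{v_i}(K)$ eventually. Combining the uniform convergence with this bound drives the first term to zero.

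The key obstacle is precisely the interplay of the two coordinates: the measure varies with $v$ while the integrand varies with $g$, and naively plugging a varying $g$ into the continuity axiom for $\lambda$ is not allowed. Lemma \ref{uniform} is the bridge that reduces the $g$-variation to uniform convergence of integrands in $C_c(E^1)$, at which point the $s$-system condition on $\lambda$ handles the $v$-variation. Once both pieces are in place, all four bullet points of the topological quiver definition are satisfied, and the proof is complete.
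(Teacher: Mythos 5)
Your proposal is correct and follows essentially the same route as the paper: the same triangle-inequality decomposition separating the variation in $v$ (handled by continuity of the original $s$-system) from the variation in $g$ (handled by Lemma \ref{uniform}), and the same device of a compactly supported bump function dominating $\chi_{\proj_{E^1}(\supp\xi)}$ to control $\lambda_{v_i}$ on the relevant compact set. The only difference is cosmetic: the paper adds and subtracts $\epsilon\int\eta\,d\lambda_v$ rather than phrasing the bound as eventual local boundedness of $v\mapsto\int\eta\,d\lambda_v$, but the underlying estimate is identical.
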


\begin{proof}
Clearly $r_\kappa$ and $s_\kappa$ are continuous, with $s_\kappa$ an open map. Since 
\[
s_\kappa^{-1}(v, g) = \{(e,g)|\,s(e)=v \} =  s^{-1}(v)\times \{g\},
\] we have $\text{supp}(\lambda_{v, g}) = s^{-1}(v,g)$. So all that remains is to show that the family $\lambda_\kappa$ is a continuous $s_\kappa$-system.    

Let $\xi\in C_c(F^1)$, and take a net $(v,g)_i=(v_i,g_i)$ converging to $(v,g)$ in $F^0$. Then compute  
\begin{align*}
&\left|\int\xi\,d\lambda_{v,g}-\int\xi\,d\lambda_{v_i,g_i }\right|=\left|\int \xi(\cdot, g)\,d\lambda_{v}- \int \xi(\cdot, g_i)\,d\lambda_{v_i}\right|\\
\leq& \left|\int \xi(\cdot, g)\,d\lambda_v - \int \xi(\cdot, g)\,d\lambda_{v_i}\right|+\left|\int \xi(\cdot, g)\,d\lambda_{v_i} - \int \xi(\cdot, g_i)\,d\lambda_{v_i}\right|\\
=& \left|\int \xi(\cdot, g)\,d\lambda_v - \int \xi(\cdot, g)\,d\lambda_{v_i}\right|+ \left|\int \xi(\cdot, g)-\xi(\cdot, g_i)\,d\lambda_{v_i}\right|\\
\leq& \left|\int \xi(\cdot, g)\,d\lambda_v - \int \xi(\cdot, g)\,d\lambda_{v_i}\right|+ \int\left| \xi(\cdot, g)-\xi(\cdot, g_i)\right|\,d\lambda_{v_i}
\end{align*}

We begin by analysing the second term above. Let $\epsilon>0$.  By Lemma \ref{uniform}, there is an index $i$ such that $j\geq i$ implies $\|\xi(\cdot, g)-\xi(\cdot, g_j)\|\leq \epsilon$. Meanwhile, by Urysohn's Lemma we may find a compactly supported function $\eta$ with $\eta\equiv 1$ on $\proj_{E^1}(\supp(\xi))$. It now follows for the second term that sufficiently far along the sequence
\begin{align*}
\int\left|\xi(\cdot, g)-\xi(\cdot, g_i)\right|\,d\lambda_{v_i}
&\leq \int\epsilon \eta\,d\lambda_{v_i} =\epsilon \int \eta\,d\lambda_{v_i}
\end{align*}
which converges by continuity to $\epsilon \int\eta\,d\lambda_v.$ Now, 
\begin{align*}&\left|\int \xi(\cdot, g)\,d\lambda_v - \int \xi(\cdot, g)\,d\lambda_{v_i}\right|+ \int\left| \xi(\cdot, g)-\xi(\cdot, g_i)\right|\,d\lambda_{v_i}\\
\leq & \left|\int \xi(\cdot, g)\,d\lambda_v - \int \xi(\cdot, g)\,d\lambda_{v_i}\right|+ \epsilon\int\eta\,d\lambda_{v_i}\\
= & \left|\int \xi(\cdot, g)\,d\lambda_v - \int \xi(\cdot, g)\,d\lambda_{v_i}\right|+ \epsilon\left( \int\eta\,d\lambda_{v_i} - \int\eta\,d\lambda_v\right) + \epsilon\left(\int\eta\,d\lambda_v\right).
\end{align*}

Now, each of the three terms vanish along the sequence by either continuity of the $s$-system or Lemma \ref{uniform}, establishing continuity for $\lambda_\kappa$.


\end{proof}

\begin{remark}\label{qaction}
Now that we have seen that $\Q\times_\kappa G$ is a topological quiver, observe how $G$ interacts with the skew product. The edge and vertex sets $E^i\times G$ are trivial bundles over $G$, and we can associate to each individual space $E^i\times G$ an action of $G$ on the right by $(e, h)\cdot g=(e, hg)$. Since $G$ acts on itself and only on the second coordinate, this defines a principal $G$-bundle $q^i:F^i\to E^i$. In practice, we will freely write $q$ to refer to either of these quotient maps. Moreover, the range and source maps each define $G$-bundle morphisms from the edge spaces to the vertex spaces. 

Since we have these quotient maps between topological spaces, we also have an induced nondegenerate $*$-homomorphism $C_0(E^0)\to M(C_0(F^0))$ by $f\mapsto f\circ q$. 
\end{remark}

\subsection{A Classification of Skew Products} 

In this section we extend the notions of \cite[section 3]{dkq} with the aim of showing that the only obstructions to a topological quiver being a skew product are the existence of a free and proper action by $G$ and a global section for the quotient. We remark that \cite{dkq} goes considerably further in their analysis of group actions on topological graphs, which we leave to pursue in future work. 

Given a topological quiver $\Q$, a locally compact group $G$, and a cocycle $\kappa:E^1\to G$, we have shown how to construct a skew product quiver. One lifts the associated structures (being the range map, source map, and the $s$-system $\lambda$) to the appropriate trivial bundles $E^i\times G$, allowing some interference by the cocycle. Investigating this construction further, we have seen that each of the trivial bundles is equipped with a free and proper action of $G$ by right translation in the second variable, and that the quotient of each space by this action recovers the original edge and vertex spaces. Our first motions demonstrate the connection shared between these nominally distinct bundles. 


\begin{definition}
Given topological quivers $\Q = (E^0, E^1, r, s, \lambda), \mathcal{R} = (F^0, F^1, r, s, \mu)$, a \emph{quiver morphism} $\phi:\Q\to \mathcal{R}$ is a pair of continuous maps $\phi = (\phi^0, \phi^1)$, where $\phi^0:E^0\to F^0$ and $\phi^1:E^1\to F^1$ such that $r\circ\phi^1 = \phi^0\circ r$ and $s\circ\phi^1 = \phi^1\circ s$. 

An \emph{isomorphism of topological quivers} is a quiver morphism with each map $(\phi^0, \phi^1)$ a homeomorphism, and which preserves the measurements of the $s$-system. By this, we mean that for every vertex $v$ and every measurable set $E\subseteq s^{-1}(v)$, $\lambda_v(E) = \mu_{\phi^0(v)}(\phi^1(E))$. It follows that the inverse pair $\phi^{-1} = ((\phi^0)^{-1},(\phi^1)^{-1}) $ is also a quiver morphism. The group of all automorphisms of a topological quiver will be denoted by $\text{Aut}(\Q)$. 
\end{definition}

A locally compact group acts on a topological quiver $\Q$ provided there is a continuous group homomorphism $\alpha: G\to \text{Aut}\Q$, with $g\mapsto \alpha_g = (\alpha_g^0, \alpha_g^1)$. The vertex action $\alpha^0_g(v)$ will often be abbreviated by $g\cdot v$, and likewise for the edge action. The action is \emph{free} provided it is free on the vertices; it follows that the action is free on the edges. The action is \emph{proper} provided the translation map is proper on the vertex space. This is also enough to deduce properness of the action on the edge space by \cite{amenable}*{Proposition 2.1.14}.

\begin{ex}
Given a topological quiver $\Q$, group $G$ and a cocycle $\kappa$, the skew product quiver $\Q\times_\kappa G$ comes equipped with an action of $G$ by right translation on the second factor, $(v, h)\cdot g = (v, hg)$. Indeed, 
\begin{align*}
 s(e, h)\cdot g &= (s(e), h)\cdot g = (s(e), hg) = s(e, hg) = s((e,h)\cdot g),\\
 r(e, h)\cdot g &= (r(e),\kappa(e)h)\cdot g = (r(e), \kappa(e)hg) = r(e, hg) = r((e, h)\cdot g),
\end{align*}
demonstrating covariance of the action with the range and source maps. Continuity is clear, and since right translation on a group is a free and proper action, it follows that the this action of $G$ on $\Q$ is both free and proper. 
\end{ex}

Given a free and proper action of a group on a topological quiver, it is easy to identify the quotient by this action. 
\begin{prop}\label{oq}
 If a group $G$ acts freely and properly on a topological quiver $\Q= (E^0, E^1, r, s, \lambda)$, then the ensemble $q(\Q)= (q(E^0), q(E^1), \dot{r}, \dot{s}, \dot{\lambda})$ is a topological quiver.  
\end{prop}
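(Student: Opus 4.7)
The plan is to verify each condition in the definition of a topological quiver for $q(\Q)$ in turn, drawing on the principal action of $G$ on $\Q$ and on Theorem \ref{orbitsystems} for the measure-theoretic part.

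First I would check that $q(E^0)$ and $q(E^1)$ are locally compact Hausdorff. Freeness on $E^0$ propagates to freeness on $E^1$ via equivariance of $s$ (if $g$ fixes an edge $e$, it fixes $s(e)$, hence is the identity), and properness on $E^0$ implies properness on $E^1$ by the cited \cite{amenable}*{Proposition 2.1.14}. Thus each action is principal, and by the standard theory for principal actions on locally compact Hausdorff spaces, each orbit space $E^i/G$ is again locally compact Hausdorff, with the orbit map $q:E^i\to E^i/G$ a continuous open surjection.

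Second, because $G$ acts by quiver automorphisms, $r$ and $s$ are $G$-equivariant, so the universal property of the quotient topology delivers continuous maps $\dot r,\dot s:q(E^1)\to q(E^0)$ satisfying $q_{E^0}\circ r=\dot r\circ q_{E^1}$ and $q_{E^0}\circ s=\dot s\circ q_{E^1}$. To see that $\dot s$ is open, for any open $U\subseteq q(E^1)$ the preimage $q_{E^1}\inv(U)$ is open in $E^1$, so $s(q_{E^1}\inv(U))$ is open in $E^0$ by openness of $s$, and then $\dot s(U) = q_{E^0}(s(q_{E^1}\inv(U)))$ is open because $q_{E^0}$ is open.

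Third, the assumption that each $\alpha_g$ is a quiver isomorphism forces it to preserve the measurements of the $s$-system, which is exactly equivariance of $\lambda$ for the $G$-action. This places us squarely in the hypothesis of Theorem \ref{orbitsystems}(a), whose conclusion produces a $\dot s$-system $\dot\lambda = \{\dot\lambda_{[v]}\}_{[v]\in q(E^0)}$ on $q(E^1)$; the support identification in the paragraph immediately preceding that theorem gives $\supp(\dot\lambda_{[v]}) = \dot s\inv([v])$, so $\dot\lambda$ is full.

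No step presents serious difficulty, since the heavy lifting has been packaged into Theorem \ref{orbitsystems} and the classical theory of principal bundles. The only place that requires care is openness of $\dot s$, which works because $q_{E^1}$ is a quotient map and $q_{E^0}\circ s$ is a composition of open maps; everything else is either a direct invocation of a cited result or a routine descent argument through the quotient topology.
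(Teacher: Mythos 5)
Your proposal is correct and follows essentially the same route as the paper's proof: principality of the actions gives locally compact Hausdorff orbit spaces with open quotient maps, equivariance of $r$ and $s$ descends them to continuous $\dot r,\dot s$ with $\dot s$ open as a composition of open maps, and Theorem \ref{orbitsystems} supplies the $\dot s$-system. Your added detail on fullness of $\dot\lambda$ and on deriving equivariance of $\lambda$ from the definition of a quiver automorphism is a welcome clarification but not a different argument.
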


\begin{proof}
Since $G$ acts freely and properly on the vertex and edge spaces the orbit spaces are locally compact Hausdorff, and the orbit maps $q:E^i\to q(E^i)$, assigning the points to their orbits are quotient maps. The covariance of the range and source maps for the actions on the vertex and edge spaces guarantees unique continuous maps $\dot{r}$ and $\dot{s}$ given by composition with $q$, and $\dot{s}$ remains open since compositions of open maps are open. Since the $s$-system is equivariant by assumption, there is also an associated $\dot{s}$-system $\dot{\lambda}$ by Theorem \ref{orbitsystems}.
\end{proof}

\begin{cor}\label{skeworbit}
Given a skew product $\Q\sk G$, the quotient quiver $q(\Q\sk G)$ is isomorphic to $\Q$.
\end{cor}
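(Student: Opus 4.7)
The plan is to exhibit a concrete quiver isomorphism from $q(\Q \times_\kappa G)$ onto $\Q$ induced by the projection onto the first coordinate. Since the $G$-action on $F^i = E^i \times G$ acts only on the second factor by right translation, the orbits are exactly the slices $\{x\} \times G$ for $x \in E^i$. Consequently, the projections $\pr_1: F^i \to E^i$ are constant on orbits and factor through the quotient maps to give continuous bijections $\overline{\pr}_1^i : q(F^i) \to E^i$. These are homeomorphisms, since the trivial section $x \mapsto (x, 1_G)$ is continuous, and post-composing with the orbit map furnishes a continuous inverse.

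Having identified $q(F^i)$ with $E^i$ via $\overline{\pr}_1^i$, I would verify that $\phi := (\overline{\pr}_1^0, \overline{\pr}_1^1)$ intertwines range and source maps with $\dot{r}$ and $\dot{s}$ from Proposition \ref{oq}. Because $s_\kappa(e, g) = (s(e), g)$ preserves the second coordinate and $r_\kappa(e, g) = (r(e), \kappa(e)g)$ alters only the second coordinate, we have $\overline{\pr}_1^0 \circ \dot{s}([e,g]) = s(e) = s \circ \overline{\pr}_1^1([e,g])$ and analogously for the range. Thus $\phi$ is a homeomorphism-level quiver morphism.

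The remaining point is that $\phi$ preserves the $s$-system measurements. By construction, the measure $\lambda_{v,g}$ is supported on the single slice $s^{-1}(v) \times \{g\}$, and pushing forward along $\pr_1$ restricted to that slice recovers $\lambda_v$ exactly. The induced $\dot{s}$-system $\dot{\lambda}$ guaranteed by Theorem \ref{orbitsystems} is characterized by the fact that $\int \phi \, d\dot{\lambda}_{[v,g]}$ agrees with integration of orbit-invariant lifts against $\lambda_\kappa$; tracing this through the identification $\overline{\pr}_1$ gives $\dot{\lambda}_{[v,g]} = \lambda_v$ for every $v$ and $g$. Hence $\phi$ is an isomorphism of topological quivers.

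The only real bookkeeping is the last step — checking that the abstractly induced measure system from Theorem \ref{orbitsystems} coincides with $\lambda$ after transport via $\overline{\pr}_1$ — and even this is essentially tautological because the skew product $s$-system was defined slice-by-slice from $\lambda$ in the first place. Thus I do not anticipate any substantive obstacle.
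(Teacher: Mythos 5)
Your proof is correct and takes essentially the same approach as the paper's, which likewise identifies the orbit spaces with $E^i$ via projection onto the first factor and observes that the induced range map, source map, and $\dot{s}$-system coincide with those of $\Q$. You simply make explicit the details (continuity of the inverse via the unit section, the slice-by-slice agreement of the measures) that the paper's terser argument leaves implicit.
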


\begin{proof}
Since any skew product quiver $\Q\times_\kappa G$ is naturally equipped with a free and proper action, it follows from Proposition \ref{oq} that the quotient is another topological quiver. In this context the orbit maps $q$ take the particularly simple form of projection onto the first factor of the products $E^i\times G$, the quotient maps $\dot{r}, \dot{s}$ coincide with the range and source maps of the original quiver, and the equivariant system of measures agrees with the system from the original quiver, so that $q(\Q\times_\kappa G)\cong \Q$. 
\end{proof}

So we see that given a quiver $\Q$, a group $G$ and a cocycle $\kappa:E^1\to G$, we can generate a new topological quiver $\Q\times_\kappa G$, and this quiver comes equipped with a free and proper action by $G$. For the case of directed graphs and discrete groups, \cite{GrossTucker}*{Theorem 2.2.2} shows that free actions by $G$ are in one to one correspondence with the skew products by some cocycle (see also \cite{dkq}*{Corollary 3.7}). We have shown that for topological quivers, every skew product construction naturally associates a free and proper action by $G$, so we can ask under what criteria this free and proper action recovers the underlying quiver and cocycle. It turns out that there is only one. 

\begin{thm}\label{classify}
Given a topological quiver $\Q$, a locally compact group $G$, and a cocycle $\kappa: E^1\to G$, the skew product quiver $\Q\times_\kappa G$ comes equipped with a free and proper action of $G$ by right translation in the second coordinate, and the quiver of orbits is isomorphic to the initial quiver $\Q$. 

Conversely, suppose we are given a topological quiver $\mathcal{R}$ together with a free and proper action by a group $G$. If the vertex orbit map $q$ is trivial (so that $F^0\cong q(F^0) \times G$), then $\mathcal{R}$ is isomorphic to a skew product quiver.
\end{thm}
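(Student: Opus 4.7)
The forward direction is already in hand: the first paragraph of the statement is an immediate consequence of the example preceding the theorem together with Corollary \ref{skeworbit}, so the substance is the converse. My plan is to produce the data of a quiver $\Q$ and a cocycle $\kappa$ by appealing to Proposition \ref{oq} and Theorem \ref{palais}, and then identifying $\mathcal{R}$ with $\Q\times_\kappa G$ by unwinding the resulting trivialization.

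First I would set $\Q := q(\mathcal{R})$ with vertex space $E^0:=q(F^0)$ and edge space $E^1:=q(F^1)$; by Proposition \ref{oq} this is a topological quiver with range map $\dot r$, open source map $\dot s$, and $\dot s$-system $\dot\lambda$. The free and proper action of $G$ makes the orbit maps $q:F^i\to E^i$ into principal $G$-bundles, and $s:F^1\to F^0$ is a $G$-bundle morphism over $\dot s$. The hypothesis provides a bundle isomorphism $F^0\cong E^0\times G$, so precomposing the source map with this trivialization makes $s$ into a $G$-bundle morphism from $F^1$ to the pullback bundle $E^1*_{\dot s}(E^0\times G)$. By Theorem \ref{palais}, the map $e\mapsto (q(e), s(e))$ is a $G$-equivariant homeomorphism $F^1\to E^1*_{\dot s}(E^0\times G)$; the latter is canonically homeomorphic to $E^1\times G$ by $(e, v, g)\leftrightarrow (e, g)$ since $v$ is forced to be $\dot s(e)$. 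I will use this identification to transport all the quiver structure from $F^1$ onto $E^1\times G$.

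Under the identification, the source map takes the expected form $s(e,g)=(\dot s(e), g)$ by construction. For the range map, $G$-equivariance gives $r(e,hg)=r(e,h)\cdot g$, so setting $\kappa(e):=\pr_2(r(e, 1_G))$ yields a continuous map $\kappa:E^1\to G$ with $r(e,g)=(\dot r(e), \kappa(e) g)$; this is precisely the cocycle structure of a skew product range map. Finally, the $s$-system $\lambda$ on $F^1$ is $G$-equivariant (the action being by homeomorphisms of the quiver preserving $\lambda$), so by Theorem \ref{orbitsystems} it descends to $\dot\lambda$ and is in turn recovered from $\dot\lambda$; tracing through the formulas shows that the induced system on $E^1\times G$ coincides with the skew-product system $\dot\lambda_\kappa$ fibrewise, giving the isomorphism $\mathcal{R}\cong \Q\times_\kappa G$ of topological quivers.

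I expect the conceptual steps to be painless thanks to Theorem \ref{palais}, which does the heavy lifting of producing the global trivialization of $F^1$ from a trivialization of the base bundle $F^0$. The main technical care is bookkeeping: verifying that the Palais isomorphism really does transport $s$ and $r$ into the advertised forms and that the measure-theoretic identification is correctly induced by the orbit-space correspondence of Theorem \ref{orbitsystems}, particularly confirming that the $\pi$-system produced from $\dot\lambda$ in part (b) of that theorem matches the original $\lambda$ under the trivialization. No further obstruction should appear, since continuity and equivariance of $\kappa$ are immediate from continuity and equivariance of $r$.
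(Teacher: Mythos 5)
Your proposal is correct and follows essentially the same route as the paper: quotient via Proposition \ref{oq}, trivialize $F^1$ over $E^1$ using Theorem \ref{palais} applied to the source map, read the cocycle off the second coordinate of the transported range map at the identity, and match the measures via Theorem \ref{orbitsystems}. The only cosmetic difference is that the paper introduces a second Palais trivialization $\rho$ adapted to the range map and extracts $\kappa$ from the automorphism $\rho\circ\sigma^{-1}$, whereas you define $\kappa(e)=\pr_2(r(e,1_G))$ directly; these coincide.
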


\begin{proof}
The first part is Corollary \ref{skeworbit}. We proceed with the other direction. 

Denote the components of $\mathcal{R}$ by $(F^0, F^1, r, s, \lambda)$, and the quiver of orbits by $q(\mathcal{R}) = (E^0, E^1, \dot{r},\dot{s},\dot{\lambda})$ which is guaranteed by Proposition \ref{oq}. By hypothesis, we have an isomorphism \[\phi: F^0\to E^0\times G,\] and we freely identify a vertex from $F^0$ with its image under $\phi$. Since the source map $s$ is $G$-equivariant, Theorem \ref{palais} produces an isomorphism $l$ from $F^1$ to the pullback $E^1*_{\dot{s}} F^0$. Considering that the assignment $(e, (v,g) )\mapsto (e, g)$ with inverse $(e, g)\mapsto (e, s(e), g)$ constructs a homeomorphism $E^1*_{\dot{s}} G\to E^1\times G$, post-composition of $l$ by this map constructs a homeomorphism 
\[\sigma: F^1\to E^1\times G.\]

We claim that the following diagram commutes
\begin{equation*}
\begin{tikzcd}
F^1\ar[swap, "q"]{dd}\ar["s"]{rrr}\ar["\sigma", swap]{dr} & & &  F^0\ar["q"]{dd}\ar["\phi"]{dl}\\
&E^1\times G\ar["\text{pr}_1"]{dl}\ar["\dot{s}\times \text{id}"]{r}&E^0\times G\ar["\text{pr}_1", swap]{dr}& &\\
E^1\ar["\dot{s}", swap ]{rrr} & &  & E^0.\\
\end{tikzcd}
\end{equation*}

It is clear that the bottom square commutes according to the definition of the central map $\dot{s}\times \id$. Meanwhile, the right triangle commutes essentially by definition, and the left triangle commutes as a consequence of Theorem \ref{palais}. For the top square, we compute that 
\[\phi\circ s\circ\sigma^{-1}(e, g) = \phi\circ s (f) = (s(e), g) = \dot{s}\times\id(e, g),\]
where $f$ is the unique element of $F^1$ associated to the data $(e, (s(e), g) )$ in $E^1*_s F^0$. Precomposition by $\sigma$ in the expression above concludes the commutativity of the diagram.


A similar argument shows that the analogous diagram for range maps also commutes, so that we have a commutative diagram 

\begin{equation*}
\begin{tikzcd}
F^0\ar["q", swap]{dd}\ar["\phi", swap ]{dr} & & & F^1\ar["q", swap]{dd}\ar["r", swap]{lll}\ar["s"]{rrr}\ar["\sigma", swap]{dr}\ar["\rho"]{dl} & & & F^0\ar["q"]{dd}\ar["\phi"]{dl}\\
& E^0\times G\ar["\pr"]{dl} & E^1\times G\ar["\dot{r}\times \id", swap]{l}\ar["\pr", swap]{dr} & & E^1\times G\ar["\dot{s}\times\id"]{r}\ar["\pr"]{dl} & E^0\times G\ar["\pr", swap]{dr} & \\
E^0 & & & E^1\ar["\dot{r}"]{lll}\ar["\dot{s}", swap]{rrr} & & & E^0.\\
\end{tikzcd}
\end{equation*}

This diagram now suggests our candidate for a quiver isomorphism with a skew product. We define $\tilde{r} = (\dot{r}\times\id)\circ \rho\circ \sigma^{-1}$, define $\Q=(E^0\times G, E^1\times G, \tilde{r}, \dot{s}\times\id, \{\lambda_{\phi(v)}\}_{v\in F^0})$, and then construct the quiver isomorphism 
\[(\sigma, \phi): \mathcal{R} \to \Q.\] 

Indeed, the maps $\sigma$ and $\phi$ are homeomorphisms by construction, and it is clear that at least the source maps factor through the isomorphism. As for the range map, we compute 
\[\tilde{r}\circ\sigma = (\dot{r}\times\id)\circ\rho = \phi\circ r,\]
verifying the range maps also factor through the isomorphism. The source map $\dot{s}\times \id = \phi\circ s\circ \sigma^{-1}$ is open as a composition of open maps, and the $\dot{s}\times\id$-system agrees with the original $s$-system up to identification of the vertices under $\phi$. Thus, $(\sigma, \phi)$ is a quiver isomorphism. 

It remains to show that the image of this isomorphism is in fact a skew product. We claim that $\Q\cong q(\mathcal{R})\sk G$ for some cocycle $\kappa$. To find this cocycle, we investigate the edge space automorphism $\rho\circ \sigma\inv$.

Since the inner diamond of the above diagram commutes, we know that $\pr\circ\rho\circ\sigma\inv = \pr$, so $\rho\circ\sigma\inv$ fixes the first factor. 
Since $\rho$ and $\sigma$ are $G$-equivariant, this map depends only on the choice of $e\in E^1$, and we define $\kappa$ according to 
\[
\kappa(e) = \pr_2\circ\rho\circ\sigma\inv(e, 1),
\] 

where we treat $1$ as the neutral element of the group. Now, let us identify $q(\mathcal{R})\times_\kappa G$. As spaces, the edge and vertex spaces are $E^1\times G$ and $E^0\times G$, and the source map for the skew product is  $(e, g)\mapsto (\dot{s}(e), g) = \dot{s}\times\id(e, g)$. For the range map, we write 
\[(e, g)\mapsto (\dot{r}(e), \kappa(e)g),\]
and compute 
\begin{align*}
(\dot{r}(e), \kappa(e)g)	&=g(\dot{r}(e), \kappa(e))\\
							&=g[\dot{r}\times\id(e, \kappa(e))]\\
							&=g[\dot{r}\times\id(\rho\circ\sigma\inv(e, 1))]\\
							&=g[\tilde{r}(e, 1)]\\
							&=\tilde{r}(e, g),\\
\end{align*} 
and verify that the range map for the skew product agrees with the range map defined for $\Q$. Finally, the equivariance of the $s$-system $\lambda$ under the group action guarantees that the orbit quiver has the $\dot{s}$-system $\dot{\lambda}$, and the skew product quiver by construction reproduces an equivariant $\dot{s}\times\id$-system $\tilde{\lambda}$ which agrees with the original $s$-system by the commutativity of the top-right square in the diagram. We thus conclude that 
\begin{align*}
\mathcal{R} &= (F^0, F^1, s, r, \lambda)\\
			&\cong (E^0\times G, E^1\times G, \dot{s}\times\id, \tilde{r}, \tilde{\lambda})\\
			&= \Q\\
			&= q(\mathcal{R})\times_\kappa G,
\end{align*}
and $\mathcal{R}$ is isomorphic to a skew-product quiver. 

\end{proof}
\section{Coactions and Skew Products}\label{algebra}



Throughout this section we fix a topological quiver $\Q$ together with a cocycle $\kappa:E^1\to G$, and write $(X, A)$ for the correspondence associated to $\Q$. Furthermore, we write $(Y, B)$ for the correspondence associated to $\Q\times_\kappa G$. 

Dual to the skew product construction for topological quivers, a cocycle also determines an action by pointwise multiplication of $C_0(G)$ on  $(X,A)$ according to the induced homomorphism $\kappa^*:C_0(G)\to M(C_0(E^1) )$. It is clear that this action commutes with the action coming from the range map, since their image is in the representation of $C_0(E^1)$ in $\adj(X)$. 

 Since $\kappa^*$ is a homomorphism of $C_0(G)$, we also have a homomorphism 
 \[
 \overline{\kappa^*\otimes \text{id}}:M(C_0(G)\otimes\cs (G) )\to M(A'\otimes\cs (G) )\subset \adj(X\otimes\cs (G) ),
 \]
since $\kappa^*$ is nondegenerate (that is, $\overline{\text{span}}(\kappa^*(C_0(G))\cdot A') = A'$). Define $v\in M(A'\otimes \cs (G) )$ a unitary by $v=\overline{\kappa^*\otimes\id}(w_G)$. 

 \begin{remark}
Because of the identification $M(C_0(X)\otimes\cs (G) )\cong C_b(X,M^\beta (\cs (G)) )$, we recognize $v$ as the function $v:E^1\to M^\beta(\cs (G) )$ with $v(e)= \iota(\kappa(e)) $. So $v$ provides a \cs-algebraic expression of the cocycle on the associated correspondence. 
 \end{remark}

 \begin{prop}\label{coaction}
Given a cocycle $\kappa$ of $G$ on $\Q$, there is an $(X, A)$ correspondence coaction $\sigma$ by $G$ on $X$ defined by $\sigma(\xi) =v\cdot(\xi\otimes 1)$. 

Moreover, $\sigma$ induces a coaction $\zeta$ by $G$ on $\cs(\Q)$ which is determined on generators by 
\begin{align*}
\zeta\circ k_X (\xi)&= \overline{k_X\otimes\id}\circ\sigma (\xi)\\
\zeta\circ k_A (\xi)&= k_A\otimes\id\\
\end{align*}
where $(k_X, k_A)$ is the universal representation of $(X, A)$ in $\cs(\Q)$. 
 \end{prop}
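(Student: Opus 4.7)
The plan is to prove the two claims in sequence: first establish that $\sigma$ is a correspondence coaction with trivial coefficient coactions $\delta_A(a) = a \otimes 1$ on both sides, then invoke the functoriality result of \cite{kqrcoact} to pass from this correspondence coaction to a coaction on the associated Cuntz-Pimsner algebra $\cs(\Q)$.

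For the correspondence coaction itself, I would verify each of the correspondence operations directly from the definition $\sigma(\xi) = v(\xi \otimes 1)$. Right $A$-linearity is immediate since $v \in M(A' \otimes \cs(G))$ acts as a left multiplier of the first factor of $X \otimes \cs(G)$ and so commutes with right multiplication by $a \otimes 1$. Left $A$-linearity requires that $r^*(A)$ commutes with $\kappa^*(C_0(G))$ inside $M(A')$, which holds since both are pointwise multiplication of bounded continuous functions on $E^1$; this commutativity lifts to give $(r^*(a) \otimes 1)\,v = v\,(r^*(a) \otimes 1)$, hence $\sigma(a \cdot \xi) = \delta_A(a)\sigma(\xi)$. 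Inner product preservation reduces to unitarity of $v$:
\[
\langle \sigma(\xi), \sigma(\eta)\rangle = \langle \xi \otimes 1, v^*v(\eta \otimes 1)\rangle = \langle \xi,\eta\rangle \otimes 1.
\]
Coaction nondegeneracy also follows cleanly from unitarity: $(1 \otimes c)\sigma(\xi) = v(\xi \otimes c)$ gives $\overline{\text{span}}((1 \otimes \cs(G))\sigma(X)) = v(X \otimes \cs(G)) = X \otimes \cs(G)$.

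The coaction identity for $\sigma$ rests on the identity $(\id \otimes \delta_G)(w_G) = (w_G)_{12}(w_G)_{13}$ in $M(C_0(G) \otimes \cs(G) \otimes \cs(G))$, verified pointwise under the identification with $C_b(G, M(\cs(G) \otimes \cs(G)))$ using $\delta_G(\iota(g)) = \iota(g) \otimes \iota(g)$. Applying $\overline{\kappa^* \otimes \id \otimes \id}$ yields $(\id \otimes \delta_G)(v) = v_{12}v_{13}$, after which a direct computation in $M(X \otimes \cs(G) \otimes \cs(G))$ shows that both $(\sigma \otimes \id)(\sigma(\xi))$ and $(\id_X \otimes \delta_G)(\sigma(\xi))$ evaluate to $v_{12}v_{13}(\xi \otimes 1 \otimes 1)$.

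To induce the coaction on $\cs(\Q)$, I would form the pair $(\psi, \pi) := ((k_X \otimes \id) \circ \sigma,\, (k_A \otimes \id) \circ \delta_A)$ as a candidate representation of $(X, A)$ in $M(\cs(\Q) \otimes \cs(G))$, whereupon the formulas on generators are built into the construction and the integrated form $\zeta := \psi \times \pi$ is the desired coaction. The main obstacle will be verifying Cuntz-Pimsner covariance of $(\psi, \pi)$ in the sense of Definition \ref{CPcovcor}, which I intend to obtain by invoking the framework of \cite{kqrcoact}, where the authors establish that correspondence coactions descend functorially to the Cuntz-Pimsner algebra. With $\zeta$ in hand, injectivity can be extracted by slicing against a state of $\cs(G)$ to recover the identity on $\cs(\Q)$, while coassociativity and coaction nondegeneracy transport from the corresponding properties of $\sigma$ along the universal representation $(k_X, k_A)$.
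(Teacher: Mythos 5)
Your overall route is the same as the paper's, but you carry out by hand what the paper delegates entirely to the literature: the paper's proof is two sentences, observing that $\mu = M\circ\kappa^*$ gives a nondegenerate homomorphism of $C_0(G)$ into $\adj(X)$ commuting with $\phi_A$ and that $X$ is nondegenerate, and then citing \cite{kqrcoact}*{Corollary 3.5}, which packages everything you verify explicitly (that $\sigma(\xi)=\overline{\mu\otimes\id}(w_G)(\xi\otimes 1)$ is a correspondence coaction over the trivial coefficient coactions \emph{and} that it descends to the Cuntz--Pimsner algebra). Your explicit verification is more informative but duplicates that corollary; since you end up invoking \cite{kqrcoact} anyway for the Cuntz--Pimsner covariance of $((k_X\otimes\id)\circ\sigma,(k_A\otimes\id)\circ\delta_A)$, you could have invoked it for the whole statement.

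One step as written is wrong, though repairable. In the nondegeneracy argument you assert $(1\otimes c)\sigma(\xi)=v(\xi\otimes c)$, which requires $1\otimes c$ to commute with $v$ in $M(A'\otimes\cs(G))\cong C_b(E^1,M^\beta(\cs(G)))$; there $v(e)=\iota(\kappa(e))$ and $(1\otimes c)(e)=c$, so the two commute only when $G$ is abelian (or the cocycle takes central values). The conclusion still holds: since $v$ \emph{does} commute with $A'\otimes 1$ (scalar-valued functions on $E^1$) and $\overline{A'\cdot X}=X$, one factors $\xi=a'\xi'$ to get $(1\otimes c)v(\xi\otimes1)=(a'\otimes c)v(\xi'\otimes1)$, and then $\overline{\operatorname{span}}\big((A'\otimes\cs(G))v\big)=A'\otimes\cs(G)$ by unitarity of $v$, whence the span is all of $X\otimes\cs(G)$. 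The remaining verifications (left/right module maps, inner products, and coassociativity via $(\id\otimes\delta_G)(w_G)=(w_G)_{12}(w_G)_{13}$) are correct.
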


\begin{proof}
 Because $X$ is nondegenerate as an $A-A$ correspondence, the results of \cite{kqrcoact} apply. In particular, since $\mu=M\circ\kappa^*$ commutes with $\phi_A$,  we apply \cite[Corollary 3.5]{kqrcoact} to generate $\zeta$. 
\end{proof}


With this coaction in hand, we may generate the coaction crossed product $\cs(\Q)\times_\zeta G$ following the theory introduced in the Preliminaries. Our main purpose is to show that this crossed product agrees with the \cs-algebra associated to the skew product quiver. 

\begin{thm}\label{isomorphism}
$\cs(\Q)\times_\zeta G \cong \cs(\Q\times_\kappa G)$.
\end{thm}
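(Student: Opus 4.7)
The plan is to construct mutually inverse nondegenerate $*$-homomorphisms $\Psi:\cs(\Q)\times_\zeta G\to \cs(\Q\sk G)$ and $\Phi:\cs(\Q\sk G)\to \cs(\Q)\times_\zeta G$ by invoking the universal properties of the two algebras. The structural observation underpinning both directions is that the edge and vertex spaces of $\Q\sk G$ are trivial $G$-bundles, so $B = C_0(F^0)\cong A\otimes C_0(G)$ and the dense subspace of $Y$ is spanned by elementary tensors $\xi\otimes h$ with $\xi\in X$ and $h\in C_0(G)$ — the only twist being the cocycle $\kappa$ sitting in the left $B$-action via $r_\kappa$.

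For $\Psi$, I build a covariant pair $(\pi,\mu):(\cs(\Q),\zeta)\to M(\cs(\Q\sk G))$. The map $\pi$ comes from a Cuntz-Pimsner covariant morphism $(\psi,\rho):(X,A)\to (M(Y),M(B))$ given by the $G$-invariant pullbacks along the orbit map: $\rho(f)=f\circ q$ and $\psi(\xi)=\xi\circ q$, interpreted as multipliers since $f\circ q, \xi\circ q$ lie in $C_b$ rather than $C_0$. Functoriality of the Cuntz-Pimsner construction \cite{kqrfunctor}*{Theorem 3.7} produces a nondegenerate $\pi:\cs(\Q)\to M(\cs(\Q\sk G))$. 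The map $\mu:C_0(G)\to M(\cs(\Q\sk G))$ is the pullback along the second-coordinate projection $F^0\to G$, post-composed with $k_B$. The covariance diagram reduces on generators $k_X(\xi)$ to the identity that both $(\pi\otimes\id)\circ\zeta$ and $\text{Ad}(\mu\otimes\id(w_G))\circ(\pi\otimes 1)$ encode the same cocycle twist — on one side via the unitary $v(e)=\iota(\kappa(e))$ defining $\sigma$, and on the other via the range-map twist $r_\kappa(e,g)=(r(e),\kappa(e)g)$. The integrated form is $\Psi$.

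For $\Phi$, I construct a Cuntz-Pimsner covariant representation $(\tilde\psi,\tilde\rho):(Y,B)\to M(\cs(\Q)\times_\zeta G)$ directly on tensors. On the dense subalgebra $A\odot C_0(G)\subset B$ set $\tilde\rho(f\otimes h)=j_{\cs(\Q)}(k_A(f))\,j_G(h)$, and on elementary tensors in $Y$ set $\tilde\psi(\xi\otimes h)=j_{\cs(\Q)}(k_X(\xi))\,j_G(h)$, extending by linearity and density. The nontrivial correspondence identities — chiefly the twisted left action $(F\cdot\eta)(e,g)=F(r(e),\kappa(e)g)\eta(e,g)$ — are reproduced inside the crossed product because the covariance relation of $\zeta$ is assembled from precisely this cocycle: pushing $j_G(h)$ past $j_{\cs(\Q)}(k_X(\xi))$ implements the $\kappa$-twist automatically via $\sigma(\xi)=v\cdot(\xi\otimes 1)$. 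Universality of $\cs(\Q\sk G)$ then delivers $\Phi$, and $\Psi\Phi=\id$, $\Phi\Psi=\id$ follow by evaluation on the respective generating sets.

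\textbf{The main obstacle} I expect is the verification of Cuntz-Pimsner covariance in each direction, in particular the ideal condition $\rho(J_X)\subset M(B;J_Y)$ (and its dual). This requires identifying the Katsura ideal of the skew product as $J_Y\cong J_X\otimes C_0(G)$, which in turn depends on the identification $F^0_{\text{reg}} = E^0_{\text{reg}}\times G$. The latter holds because the local-homeomorphism condition on $s_\kappa$ and the compactness condition on $r_\kappa\inv$ in the definition of $E^0_{\text{fin}}$ pass cleanly through the trivial bundle $q:F^0\to E^0$, and interior/closure of $\rg$ are preserved by the homeomorphism. A secondary technical point, the compatibility of the $\lambda_\kappa$-valued inner product with the tensor decomposition, is handled by Theorem \ref{orbitsystems}, which is already available from the preliminaries.
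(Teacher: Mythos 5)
Your construction of the forward map is essentially the paper's: the same Cuntz--Pimsner covariant pair $(\mu,\nu)$ built from pullback along the orbit maps, the same $C_0(G)$-representation via the second-coordinate projection, and the same covariance computation matching the unitary $v(e)=\iota(\kappa(e))$ against the twist in $r_\kappa$ (Propositions \ref{CPcov} and \ref{covariant}). Where you genuinely diverge is in proving bijectivity. The paper shows surjectivity on generators and then gets injectivity from the Dual-Invariant Uniqueness Theorem \cite{boiler}*{Theorem 5.4}, whose hypotheses are cheap here: $\Pi$ is injective via the gauge action, and the right-translation action of $G$ on $(Y,B)$ supplies the required action $\beta$ fixing $\Pi$ and translating $\psi$. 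You instead build an explicit inverse $\Phi$ from a Cuntz--Pimsner covariant representation of $(Y,B)$ in the crossed product, which is the route of \cite{kqr:graph} in the discrete case. What your route buys is a hands-on inverse and no appeal to dual-invariant uniqueness; what it costs is real. First, you need the exact identification $F^0_{\text{reg}}=E^0_{\text{reg}}\times G$ (equivalently $J_Y\cong J_X\otimes C_0(G)$) so that the Cuntz--Pimsner relation for $(\tilde\psi,\tilde\rho)$ holds on all of $J_Y$ and not just on $\nu(J_X)B$; the paper's Lemma \ref{kat mult} proves only the containment $E^0_{\text{reg}}\times G\subseteq F^0_{\text{reg}}$, which is all the forward direction requires, so you are signing up for the converse inclusion (it does hold --- compactness of $r_\kappa\inv(\overline{W}\times\overline{V})$ projects to compactness of $r\inv(\overline{W})$, and the local-homeomorphism condition restricts to slices --- but it is an additional lemma, not a formality). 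Second, the assertion that ``pushing $j_G(h)$ past $j_{\cs(\Q)}(k_X(\xi))$ implements the $\kappa$-twist'' is, for nondiscrete nonabelian $G$, a statement that must be extracted from the covariance identity in $M\bigl((\cs(\Q)\times_\zeta G)\otimes\cs(G)\bigr)$ by slicing, and the twisted product $\phi_B(f\otimes h)(\xi\otimes h')$ is not an elementary tensor, so both the Toeplitz relations and the extension of $\tilde\psi$ by density need an approximation argument there. Neither issue is fatal, but the paper's choice of uniqueness theorem is precisely what lets it avoid both.
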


\begin{remark}
Here is our strategy to prove this theorem. We will first produce a covariant representation $(\Pi,\psi): (\cs(\Q),\zeta)\to M(\cs(\Q\sk G)$, thereby generating a unique homomorphism 
\[\Pi\times\psi:\cs(Q)\times_\zeta G\to \cs(\Q\sk G).\]
Then, we will invoke the Dual Invariant Uniqueness Theorem \cite[Theorem 5.4]{boiler} to show that the integrated form is injective. To achieve this end, our approach will be through a series of lemmas and propositions. As a guide, it may be constructive to consult the following diagram: 

\begin{equation}\label{integratedform}
\begin{tikzcd}
\cs(\Q)\ar["\Pi", swap]{ddr}\ar["j_{\cs(\Q)}"]{r} & M(\cs(\Q)\times_\zeta G\ar["\Pi\times\psi"]{dd}) & \ar["j_{G}", swap]{l}\ar["\psi"]{ddl}C_0(G) \\
\\
 &M(\cs(\Q\times_\kappa G)). & \\
 \end{tikzcd}
\end{equation} 
We begin with the construction of the homomorphism $\Pi$, which will require the most significant resources. 
\end{remark}


\begin{prop}\label{CPcov}
There is a Cuntz-Pimsner covariant homomorphism $(\mu, \nu): (X,A)\to (M(Y), M(B))$. By the functoriality of Cuntz-Pimsner covarience \cite{kqrfunctor}, there is a unique associated homomorphism $\Pi:\cs(\Q)\to M(\cs(\Q\times_\kappa G))$. 
\end{prop}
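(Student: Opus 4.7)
The plan is to define $(\mu, \nu)$ by pullback along the orbit projections $q: F^i \to E^i$ introduced in Remark \ref{qaction}, verify the correspondence-morphism relations and the four conditions of Definition \ref{CPcovcor}, and then invoke the functor of \cite{kqrfunctor}*{Theorem 3.7} to produce $\Pi$.

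First I would set $\nu: A \to M(B)$ to be $\nu(f) = f \circ q$, which is a nondegenerate $*$-homomorphism into $C_b(F^0) = M(B)$ since $q$ is a continuous surjection. For $\mu: X \to M(Y)$, I would declare, for $\xi \in C_c(E^1)$, that $\mu(\xi)$ is the left multiplier of $Y$ given on $C_c(F^1)$ by pointwise multiplication by $\xi \circ q$, and then extend by continuity. The correspondence-morphism identities reduce to pointwise computations using $r \circ q = q \circ r_\kappa$ and $s \circ q = q \circ s_\kappa$; for example, the inner product check uses the defining formula $\int \phi\, d\lambda_{\kappa,(v,g)} = \int \phi(\cdot, g)\, d\lambda_v$ to give, for $\xi_1, \xi_2 \in C_c(E^1)$,
\[
\langle \mu(\xi_1), \mu(\xi_2)\rangle_{M(B)}(v,g) = \int \overline{\xi_1(e)}\xi_2(e)\, d\lambda_v(e) = \nu\bigl(\langle \xi_1, \xi_2\rangle\bigr)(v,g).
\]

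For Cuntz--Pimsner covariance, conditions (i) and (ii) follow from the construction. The main obstacle is condition (iii), which requires $\nu(J_X) \subseteq M(B; J_Y)$; since $J_X = C_0(E^0_{\text{reg}})$ and $J_Y = C_0(F^0_{\text{reg}})$, this reduces to showing $q\inv(E^0_{\text{reg}}) \subseteq F^0_{\text{reg}}$. For the image part, $r_\kappa(F^1) = r(E^1) \times G$ because the second coordinate sweeps out $G$ under right translation, so $\text{Int}(\overline{r_\kappa(F^1)}) = \text{Int}(\overline{r(E^1)}) \times G$. For the finiteness part, given $v \in E^0_{\text{fin}}$ with a precompact neighborhood $V$ witnessing finiteness at $v$, and a precompact neighborhood $W$ of $g \in G$, I would verify that $V \times W$ witnesses finiteness at $(v, g)$: compactness of $r_\kappa\inv(\overline{V}\times\overline{W})$ follows from its containment in the compact product $r\inv(\overline{V}) \times K\inv\overline{W}$ with $K = \kappa(r\inv(\overline{V}))$, and the local homeomorphism property of $s_\kappa$ reduces on small product neighborhoods to the hypothesis on $s$.

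Condition (iv) is then a pointwise verification: both compositions around the square act on $\eta \in C_c(F^1)$ by multiplication by the function $(e, g) \mapsto f(r(e))$, which can be checked on rank-one operators $\theta_{\xi_1, \xi_2} \in \bK(X)$ via the standard identity $\mu^{(1)}(\theta_{\xi_1, \xi_2}) = \theta_{\mu(\xi_1), \mu(\xi_2)}$ and then extended to all of $\phi_A(J_X)$ by norm density. With $(\mu, \nu)$ established as a Cuntz--Pimsner covariant correspondence homomorphism, \cite{kqrfunctor}*{Theorem 3.7} furnishes the desired nondegenerate $\Pi: \cs(\Q) \to M(\cs(\Q \sk G))$. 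I expect condition (iii) to be the main obstacle, since it is the only point where the cocycle and the quiver-theoretic regularity data of $\Q \sk G$ genuinely interact.
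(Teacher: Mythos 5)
Your proposal is correct and follows essentially the same route as the paper: pull back along the orbit projections to define $(\mu,\nu)$, verify the correspondence identities pointwise using $\lambda_{v,g}$, establish condition (iii) by showing regular vertices of $\Q$ lift to regular vertices of $\Q\sk G$ (with the same compactness estimate $r_\kappa\inv(U)\subseteq r\inv(W)\times\kappa(r\inv(W))\inv V$), check condition (iv) by finite-rank approximation, and invoke the functoriality of \cite{kqrfunctor}. The only place you are lighter than the paper is in verifying that each $\mu(\xi)$ is genuinely an adjointable map $B\to Y$ landing in the relative multipliers $M_B(Y)$, which the paper handles with a separate boundedness/adjoint lemma, but this is a matter of detail rather than of approach.
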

 

We define $\nu:A\to M(B)$ according to $\nu:f\mapsto f\circ q$, and turn our attention to the map $\mu$. Throughout this discussion, fix $\xi\in C_c(E^1)$ and $f\in C_c(F^0)$. 

\begin{lem}\label{cpct}
 The function $(\xi\circ q) \cdot (f\circ s): F^1\to \mathbb{C}$ has compact support.
\end{lem}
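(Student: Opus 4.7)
The plan is to directly exhibit a compact set in $F^1 = E^1\times G$ that contains the support of the product. Write $K = \supp(\xi)\subseteq E^1$ and $L = \supp(f)\subseteq F^0 = E^0\times G$, both of which are compact by hypothesis. Let $L_G = \pr_G(L)\subseteq G$, the continuous image of a compact set, hence compact.

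The first factor $(\xi\circ q)(e,g) = \xi(e)$ vanishes unless $e\in K$, so the product vanishes outside $K\times G$. The second factor $(f\circ s)(e,g) = f(s(e),g)$ vanishes unless $(s(e),g)\in L$, which in particular forces $g\in L_G$. Combining these observations, the set on which $(\xi\circ q)\cdot(f\circ s)$ is nonzero is contained in $K\times L_G$, and since supports are defined as closures of these sets, the support is contained in the compact set $K\times L_G\subseteq F^1$. As a closed subset of a compact set it is compact.

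The only real content is noticing that the $G$-coordinate is compactly controlled through $f\circ s$ (since $s(e,g)=(s(e),g)$ has second coordinate $g$), while the $E^1$-coordinate is controlled through $\xi\circ q$. There are no obstacles here; the lemma is a routine bookkeeping exercise that will be invoked repeatedly to regard expressions such as $(\xi\circ q)\cdot(f\circ s)$ as elements of $C_c(F^1)$, which is the genuine purpose of the statement.
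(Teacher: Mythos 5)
Your proof is correct and rests on the same observation as the paper's: the support is trapped inside a compact set assembled from $\supp(\xi)$ and $\supp(f)$. The only cosmetic difference is that the paper phrases this via the identification $F^1\cong E^1*_s F^0$ and takes $(K\times L)\cap(E^1*_s F^0)$, whereas you work directly in the product coordinates $E^1\times G$ and bound the support by $K\times\pr_G(L)$, which is if anything slightly more economical.
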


\begin{proof}
Let $K=\supp(\xi)\subseteq E^1$ and $L=\supp(f)\subseteq F^0$. We may identify $F^1$ with $E^1*_s F^0$ by the map $(e,g)\mapsto (e, s(e), g)$ by Theorem \ref{palais}. Then 
\[
(K\times L)\cap (E^1*_s F^0) = \{(e, v, g): e\in K, (v,g)\in L, \text{ and } s(e)=q(v,g) = v\}
\] 
is compact in $E^1*_sF^0\cong F^1$. This is exactly the support of $\xi\circ q \cdot f\circ s$. 
\end{proof}

\begin{lem}\label{bounded}
 The map $\mu(\xi): C_c(F^0)\to C_c(F^1)$ defined by $\mu(\xi)\cdot f = \xi\circ q\cdot f\circ s$ is uniform to Hilbert-norm bounded, and hence extends to a bimodule map $\mu(\xi): B\to Y$. 
\end{lem}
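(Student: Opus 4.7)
The plan is to compute the $B$-valued inner product $\langle \mu(\xi)\cdot f, \mu(\xi)\cdot f\rangle$ explicitly and to obtain a bound of the form $\|\mu(\xi)\cdot f\|_Y \leq C_\xi \|f\|_\infty$; uniform density of $C_c(F^0)$ in $B = C_0(F^0)$ then yields the desired extension to $B$, and the right $B$-module property can be checked on the dense subspace and pushed forward by continuity.

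For the key computation, by Lemma \ref{cpct} and Remark \ref{qaction} the function $\mu(\xi)\cdot f$ lies in $C_c(F^1)$ and evaluates at $(e,g)$ to $\xi(e) f(s(e),g)$. Since $\lambda_{\kappa;(v,g)}$ is concentrated on the slice $s^{-1}(v)\times \{g\}$ and transports $\lambda_v$ onto this slice, the factor $f(s(e),g)$ specializes on the support of this measure to the constant $f(v,g)$. Consequently
\[
\langle \mu(\xi)\cdot f, \mu(\xi)\cdot f\rangle(v,g) = |f(v,g)|^2\, \langle\xi,\xi\rangle(v),
\]
and taking supremum norms produces $\|\mu(\xi)\cdot f\|_Y \leq \|\xi\|_X\, \|f\|_\infty$, which is the uniform-to-Hilbert bound required.

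With this bound in hand, the extension to a bounded linear map $\mu(\xi): B\to Y$ is routine from uniform density of $C_c(F^0)$ in $C_0(F^0)$. To verify the extension is a right $B$-module map, one checks on $C_c(F^0)$ that $\mu(\xi)(fh) = \mu(\xi)(f)\cdot h$: both sides evaluate at $(e,g)$ to $\xi(e) f(s(e),g) h(s(e),g)$, using that the right action of $B$ on $Y$ is $(\eta\cdot h)(e,g) = \eta(e,g)\, h(s_\kappa(e,g))$. The identity then persists on all of $B$ by continuity of both sides. I do not foresee a serious obstacle here; the principal content is the measure-theoretic observation that $f\circ s$ becomes a constant scalar on the support of each fibre measure $\lambda_{\kappa;(v,g)}$, which produces the clean factorization above and thereby the norm bound.
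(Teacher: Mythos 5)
Your proposal is correct and follows essentially the same route as the paper: the central step in both is the computation $\langle \mu(\xi)\cdot f, \mu(\xi)\cdot f\rangle(v,g) = \langle\xi,\xi\rangle(v)\,|f(v,g)|^2 \leq \|\xi\|^2\|f\|_u^2$, using that $\lambda_{v,g}$ is supported on the slice $s^{-1}(v)\times\{g\}$ so that $f\circ s$ is the constant $f(v,g)$ there, followed by extension via density. Your additional verification of the right $B$-module property is a harmless supplement (the paper defers the module structure to a later remark), and nothing in your argument is missing or incorrect.
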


\begin{proof}
Lemma \ref{cpct} shows that $\mu(\xi)$ is well defined, and linearity is clear. So see that 
\begin{align*}
\langle\mu(\xi)\cdot f, \mu(\xi)\cdot f\rangle(v,g) &= \int \overline{(\xi\circ q)\cdot (f\circ s)}\cdot(\xi\circ q)\cdot (f\circ s)\,d\lambda_{v,g}\\
&= \int |\xi\circ q|^2|f\circ s|^2\,d\lambda_{v,g}\\ 
&= \langle\xi, \xi \rangle (q(v,g))|f(v,g)|^2\leq \|\xi\|^2\|f\|_u^2.\\
\end{align*}
Since this holds for all vertices $(v, g)$, we see that
\begin{equation}\label{bdd}
\|\mu(\xi)\cdot f\|\leq \|\xi\|\|f\|_u. 
\end{equation}
So $\mu(\xi)$ is bounded, and thus extends uniquely to a bounded linear map $B\to Y$. 
\end{proof}

\begin{prop}
There is a Hilbert to uniform-norm bounded map $\mu(\xi)^*: C_c(F^1)\to C_c(F^0)$. Moreover, the unique continuous linear extension is an adjoint for $\mu(\xi)$. 
\end{prop}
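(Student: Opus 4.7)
The plan is to guess the formula for $\mu(\xi)^*$ by imposing the defining adjoint identity $\langle \mu(\xi)^*\eta, f\rangle_B = \langle \eta, \mu(\xi)f\rangle_B$ on $\eta \in C_c(F^1)$, $f\in C_c(F^0)$. Using the product structure of $\lambda_\kappa$ (namely $\int h\,d\lambda_{v,g} = \int h(\cdot, g)\,d\lambda_v$ for $h \in C_c(F^1)$), a direct computation yields
\[
\langle \eta, \mu(\xi)f\rangle_B(v, g) = f(v, g)\int_{E^1} \overline{\eta(e, g)}\,\xi(e)\,d\lambda_v(e),
\]
which forces the definition
\[
\mu(\xi)^*\eta(v, g) := \int_{E^1} \eta(e, g)\,\overline{\xi(e)}\,d\lambda_v(e).
\]

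Next I would verify that $\mu(\xi)^*\eta$ lies in $C_c(F^0)$. Compact support is immediate: the integrand vanishes unless $v \in s(\supp\xi)$ and $g \in \proj_G(\supp\eta)$, and both sets are compact. Continuity at $(v,g)\in F^0$ follows the same template as the earlier continuity proof for $\lambda_\kappa$: given a net $(v_i, g_i)\to (v, g)$, insert and subtract $\int \eta(\cdot, g)\overline{\xi}\,d\lambda_{v_i}$ to split $|\mu(\xi)^*\eta(v, g) - \mu(\xi)^*\eta(v_i, g_i)|$ into two terms. The first, $\bigl|\int \eta(\cdot, g)\overline\xi\,d\lambda_v - \int \eta(\cdot, g)\overline\xi\,d\lambda_{v_i}\bigr|$, vanishes by continuity of the $s$-system applied to the fixed function $\eta(\cdot, g)\overline{\xi}\in C_c(E^1)$. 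The second, $\int |\eta(\cdot, g) - \eta(\cdot, g_i)|\,|\xi|\,d\lambda_{v_i}$, is controlled by Lemma \ref{uniform} applied to $\eta$, after replacing $|\xi|$ with an Urysohn cutoff whose $\lambda_{v_i}$-mass is locally bounded (again by continuity of $\lambda$).

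For the norm bound, pointwise Cauchy--Schwarz for the measure $\lambda_v$ gives
\[
|\mu(\xi)^*\eta(v,g)|^2 \leq \Bigl(\int|\eta(\cdot,g)|^2\,d\lambda_v\Bigr)\Bigl(\int|\xi|^2\,d\lambda_v\Bigr) = \langle \eta, \eta\rangle_B(v, g)\cdot\langle\xi, \xi\rangle_A(v) \leq \|\eta\|^2\|\xi\|^2,
\]
so $\|\mu(\xi)^*\eta\|_u \leq \|\xi\|\,\|\eta\|$, which is exactly the claimed Hilbert-to-uniform bound. Consequently $\mu(\xi)^*$ extends uniquely to a bounded linear map $Y \to B$. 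The adjoint identity holds on the dense pair $C_c(F^1)\times C_c(F^0)$ by the computation of the first paragraph, and since both sides of the identity are jointly continuous (in the Hilbert norm on $\eta$ and the $B$-norm on $f$, using also the bound \eqref{bdd} from Lemma \ref{bounded}), it extends to all of $Y\times B$, identifying the extension as the adjoint of $\mu(\xi)$.

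The main obstacle is the continuity step in the second paragraph: neither the continuity of the $s$-system alone nor Lemma \ref{uniform} alone suffices, and the argument must simultaneously control vertex-variation (through $\lambda$) and group-variation (through uniform convergence of slices $\eta(\cdot, g_i)$). Everything else is a routine algebraic manipulation or an application of Cauchy--Schwarz.
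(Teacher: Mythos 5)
Your proposal is correct and follows essentially the same route as the paper: the same formula for $\mu(\xi)^*$ (which the paper phrases as the fibrewise $L^2(\lambda_{v,g})$ inner product $\langle \xi\circ q|,\eta|\rangle$), the same pointwise Cauchy--Schwarz estimate giving the Hilbert-to-uniform bound $\|\mu(\xi)^*\eta\|_u\leq\|\xi\|\|\eta\|$, and the same inner-product computation for adjointness, which you merely run in reverse to derive the formula rather than verify it. The only substantive difference is that you explicitly check that $\mu(\xi)^*\eta$ is continuous and compactly supported on $F^0$, a point the paper leaves implicit.
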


\begin{proof}
Fix $\eta\in C_c(F^1)$ and define $\mu(\xi)^*$ by 
\[\mu(\xi)^*\cdot \eta (v,g) = \int \overline{\xi\circ q (\alpha, g)}\eta(\alpha, g)\, d\lambda_{v,g}(\alpha, g).\]
Notice that the restriction of $\xi\circ q$ to $s^{-1}(v,g)$ (and we will denote all such restrictions by, e.g. $\xi\circ q |$) agrees with the restriction of $\xi$ to $s^{-1}(v)$ so that $\xi\circ q|\in C_c(s^{-1}(v,g))$, and  $\mu(\xi)^*\cdot \eta (v,g) = \langle \xi\circ q|, \eta|\rangle$, the inner product for $(L^2(F^1), \lambda_{v,g})$. Then 
\begin{align*}
|\mu(\xi)^*\cdot \eta(v,g)|^2 &= |\langle \xi\circ q|, \eta|\rangle|^2\leq \left[\int|\xi\circ q|^2\,d\lambda_{v,g}\right]\cdot \left[\int |\eta|^2 \,d\lambda_{v,g}\right]\\
&=\langle\xi\circ q, \xi\circ q\rangle(v,g)\cdot \langle\eta,\eta\rangle(v,g)\leq \|\xi\|^2\cdot\|\eta\|^2,
\end{align*}
so $\mu(\xi)^*\eta$ is bounded from $C_c(F^1)$ with the Hilbert norm to $C_c(F^0)$ with the uniform norm. So $\mu(\xi)^*$ extends uniquely to $\mu(\xi)^*: Y\to B$. 

Lastly, to check adjoints, for $f\in C_0(F^0)$, 
\begin{align*}
&\langle\mu(\xi)^*\cdot\eta, f\rangle(v,g) =\overline{\int \overline{\xi\circ q}\cdot \eta \, d\lambda_{v,g}}\cdot f(v,g) = \int (\xi\circ q)\cdot \overline{\eta}\,d\lambda_{v,g}\cdot f(v,g)\\
&=\int (\xi\circ q) \cdot \overline{\eta} \cdot f\circ s\,d\lambda_{v,g} = \langle\eta, (\xi\circ q)\cdot (f\circ s)\rangle(v,g) = \langle\eta, \mu(\xi)\cdot f\rangle(v,g). 
\end{align*}
It follows that $\mu(\xi)^*$ is an adjoint for $\mu(\xi)$. 
\end{proof}

\begin{remark}\label{relativemultiplier}
It now follows that $\mu(\xi)\in \adj(B, Y) = M(Y)$ for any $\xi\in C_c(E^1)$. Since the map $\mu:C_c(E^1)\to M(Y)$ is bounded by \eqref{bdd}, $\mu$ extends to a linear map $\mu:X\to M(Y)$. Now, for any $\xi\in X$, we already have a natural right action of $B$ on $\mu(\xi)$ given by evaluation, but we also have a left action of $B$ on $\mu(\xi)$, defined by $f\cdot \mu(\xi) = (f\circ r)\cdot (\xi\circ q)$. Arguments analogous to the proofs of Lemmas \ref{cpct} and  \ref{bounded} show that $f\cdot \mu(\xi)\in Y$, and thus $\mu$ maps $X$ to the relative multiplier algebra $M_B(Y)$.
\end{remark}

\begin{prop}
The pair $(\mu,\nu):(X,A)\to (M(Y), M(B))$ is a correspondence homomorphism.
\end{prop}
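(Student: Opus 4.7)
The plan is to verify the three defining conditions of a correspondence homomorphism, which reduces to checking preservation of the left action, right action, and $A$-valued inner product. The map $\nu: A \to M(B)$ is visibly a nondegenerate $*$-homomorphism (pullback along the continuous surjection $q:F^0\to E^0$), and $\mu:X\to M(Y)$ is linear and Hilbert-norm bounded by Lemma \ref{bounded}. So it suffices to verify the module and inner-product compatibilities on $C_c(E^1)\subset X$ and on $A$, then extend by continuity.

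The two module compatibilities reduce to the equivariance of $r$ and $s$, namely $q\circ r = r\circ q$ and $q\circ s = s\circ q$ (where the quotient maps on $F^i$ are the projections onto the first coordinate). Explicitly, for $a\in A$, $\xi\in C_c(E^1)$, and $f\in C_c(F^0)$, I would compute
\[
\mu(\xi\cdot a)(f)(e,g) = \xi(q(e,g))\, a(s(q(e,g)))\, f(s(e,g)) = \xi(q(e,g))\, \nu(a)(s(e,g))\, f(s(e,g)),
\]
which agrees with $(\mu(\xi)\cdot\nu(a))(f)(e,g)$. The identity $\mu(a\cdot\xi) = \nu(a)\cdot\mu(\xi)$ is established by an identical computation using equivariance of $r$ in place of $s$.

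The main obstacle is the inner-product identity $\nu(\langle\xi,\eta\rangle_A) = \langle\mu(\xi),\mu(\eta)\rangle_B = \mu(\xi)^*\mu(\eta)$, which requires unwinding the relationship between $\lambda$ and $\lambda_\kappa$. Using the formula for $\mu(\xi)^*$ from the preceding proposition, for $f\in C_c(F^0)$ and $(v,g)\in F^0$,
\[
\bigl[\mu(\xi)^*\mu(\eta)\cdot f\bigr](v,g) = \int \overline{\xi\circ q(\alpha,g)}\,\eta\circ q(\alpha,g)\, f(s(\alpha,g))\, d\lambda_{v,g}(\alpha,g).
\]
On $\supp(\lambda_{v,g}) = s^{-1}(v)\times\{g\}$ we have $s(\alpha)=v$, so $f(s(\alpha,g)) = f(v,g)$ factors out of the integral. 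The defining relation $\lambda_{v,g}(E) = \lambda_v(\{e:(e,g)\in E\})$ then reduces the remaining integral to $\int \overline{\xi(e)}\eta(e)\,d\lambda_v(e) = \langle\xi,\eta\rangle_A(v) = \nu(\langle\xi,\eta\rangle_A)(v,g)$, giving the desired identity.

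Finally, all three identities extend from the dense subspace $C_c(E^1)$ to all of $X$ by the Hilbert-norm continuity of $\mu$ (from Lemma \ref{bounded}), continuity of $\nu$, and joint continuity of the correspondence operations, completing the verification that $(\mu,\nu)$ is a correspondence homomorphism.
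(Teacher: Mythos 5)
Your proposal is correct and follows essentially the same route as the paper: verify the left-module compatibility via the equivariance $r\circ q = q\circ r$, and verify the inner-product identity by evaluating $\mu(\xi)^*\mu(\eta)\cdot f$ at a vertex $(v,g)$, factoring $f(v,g)$ out of the integral over $\operatorname{supp}(\lambda_{v,g})$, and reducing to $\lambda_v$ by the defining relation between the two systems of measures. The only (harmless) additions are your explicit check of the right-module compatibility, which is built into the definition of $\mu(\xi)$, and the closing density argument.
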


\begin{proof}
It suffices to confirm that $\langle \mu(\upsilon), \mu(\xi)\rangle = \nu(\langle\upsilon, \xi\rangle)$, and that $\mu(f\cdot \xi) = \nu(f)\cdot\mu(\xi)$. We first check covariance of the left action, by computing 
\begin{align*}
\mu(f\cdot \xi) &= (f\circ r\cdot \xi)\circ q = (f\circ r)\circ q )\cdot (\xi\circ q)\\
 &= (f\circ q)\circ r\cdot (\xi\circ q) = \nu(f)\cdot \mu(\xi).
\end{align*} 

To confirm the relationships between the inner products, we first choose $f\in B$ and compute that for any vertex $(v,g)$, 
\begin{align*}
\langle \mu(\eta), \mu(\xi)\rangle\cdot f(v,g) &=\mu(\eta)^*\mu(\xi)\cdot f (v,g) = \mu(\eta)^*[\xi\circ q\cdot f\circ s](v,g)\\
&= \int \overline{\eta\circ q}\cdot (\xi\circ q)\cdot(f\circ s) \,d\lambda_{v,g} = \int\overline{\eta\circ q}\cdot (\xi\circ q)\,d\lambda_{v,g} f(v,g)\\
&=\int (\overline{\eta}\cdot\xi)\circ q\,d\lambda_{v,g} f(v,g) = \left[\int \overline{\eta}\cdot \xi\, d\lambda_v\right]\circ q \cdot f(v,g)\\
&= \langle\eta, \xi\rangle \circ q \cdot f(v,g) = \nu(\langle\eta, \xi\rangle)\cdot f(v,g).
\end{align*} 
Since $f$ and $(v,g)$ were arbitrary, the result follows. 
\end{proof}

With this correspondence homomorphism now in hand, we now turn our attention to Cuntz-Pimsner covariance for this homomorphism. 

\begin{lem}\label{kat mult}
We have $\nu(J_X)\subseteq M(B; J_Y)$. 
\end{lem}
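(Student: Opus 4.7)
The plan is to reduce the lemma to a pointwise topological inclusion at the level of vertex spaces. By \cite{quiver}*{Proposition 3.15} and the description of the regular vertices recalled above, $J_X$ is identified with the ideal $C_0(E^0_{\text{reg}}) \subseteq A$, and likewise $J_Y$ with $C_0(F^0_{\text{reg}}) \subseteq B$. Because $B$ is commutative, $M(B; J_Y)$ reduces to $\{m \in M(B) : mB \subseteq J_Y\}$. For $f \in J_X$, the multiplier $\nu(f) = f \circ q$ vanishes off $q^{-1}(E^0_{\text{reg}})$, so any product $\nu(f) h$ with $h \in B$ is a function in $B$ supported in that same set. It is therefore enough to establish the containment
\[
q^{-1}(E^0_{\text{reg}}) \subseteq F^0_{\text{reg}}.
\]

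I would split this containment along the decomposition $F^0_{\text{reg}} = F^0_{\text{fin}} \cap \text{Int}(\overline{r_\kappa(F^1)})$. The interior-of-range piece is the easy one: a direct computation shows $r_\kappa(F^1) = r(E^1) \times G$ (for fixed $v \in r(E^1)$ and any $g \in G$, pick $e$ with $r(e) = v$ and take $h = \kappa(e)^{-1}g$), so
\[
\text{Int}(\overline{r_\kappa(F^1)}) = \text{Int}(\overline{r(E^1)}) \times G = q^{-1}(\text{Int}(\overline{r(E^1)})),
\]
and the corresponding inclusion is automatic.

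The finite-support piece carries the real content. Given $v \in E^0_{\text{fin}}$ with witness neighborhood $V$, and any precompact open neighborhood $U$ of $g$ in $G$, I would take $W := V \times U$ as the candidate witness for $(v, g) \in F^0_{\text{fin}}$. The main obstacle is showing $r_\kappa^{-1}(\overline{W})$ is compact, since $r_\kappa(e, h) = (r(e), \kappa(e)h)$ mixes the two coordinates and the constraint on $h$ depends on $e$. The key observation is that $r(e) \in \overline{V}$ already forces $e$ into the compact set $r^{-1}(\overline{V})$, whose image $K := \kappa(r^{-1}(\overline{V}))$ is compact in $G$ by continuity of $\kappa$; the constraint $\kappa(e) h \in \overline{U}$ then confines $h$ to the compact set $K^{-1}\overline{U}$, realizing $r_\kappa^{-1}(\overline{W})$ as a closed subset of the compact product $r^{-1}(\overline{V}) \times K^{-1}\overline{U}$. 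Once compactness is secured, the local homeomorphism condition is immediate: $r_\kappa^{-1}(W)$ is open in $r^{-1}(V) \times G$, on which $s_\kappa = s \times \id_G$ acts as the product of a local homeomorphism with the identity, and this property passes to the open restriction. That would finish the verification $(v, g) \in F^0_{\text{fin}}$ and hence the lemma.
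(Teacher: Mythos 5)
Your proof is correct and follows essentially the same route as the paper's: reduce to the pointwise inclusion of regular vertices, witness $(v,g)\in F^0_{\text{fin}}$ by a product neighborhood, and trap $r_\kappa^{-1}(\overline{V}\times\overline{U})$ inside the compact set $r^{-1}(\overline{V})\times\kappa(r^{-1}(\overline{V}))^{-1}\overline{U}$ before observing that $s_\kappa$ splits as a local homeomorphism times the identity. If anything, your explicit verification that $\text{Int}(\overline{r_\kappa(F^1)})=\text{Int}(\overline{r(E^1)})\times G$ is more complete than the paper's treatment of that condition.
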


\begin{proof}

For topological quivers, the ideal $J_X$ is characterized in the preliminaries as $J_X=C_0(E^0_{\text{rg}})$, the set of regular vertices of the topological quiver. 
It suffices to show that for any $f\in J_X = C_0(E^0_\rg)$ and $b\in B$, we have $\nu(f)\cdot b \in J_Y$. So suppose $\nu(f)\cdot b(v,g)\neq 0$, and we will show that $(v,g)$ is a regular vertex of $\Q\times_\kappa G$. It follows that  $f\circ q (v,g) = f(v)\neq 0$, so $v\in E^0_\rg$. Since $v$ is not a sink, we can easily find an edge emitted by $(v,g)$. Meanwhile we may choose a suitable compact neighborhood $W\subseteq E^0$ of $v$ with $r^{-1}(W)\subseteq E^1$ compact and $s$ restricted to this subset a local homeomorphism. 

Define a compact subset $U=W\times V$ of $E^0\times G$. We claim that $U$ is a precompact neighborhood of $(v, g)$ which has $r^{-1}(\overline{U})$ compact and $s|_{r^{-1}(U)}$ a local homeomorphism. U is closed, so $r^{-1}(\overline{U})=r^{-1}(U)$ is closed in $E^1\times G$. Moreover, if $(e, g)\in r^{-1}(U)$, then $r(e, g) = (r(e), k(e)g)$, and it follows that 
\[
r^{-1}(U)\subseteq r^{-1}(W)\times \kappa(r^{-1}(W))^{-1}\cdot V.
\]
Note that since the second factor in the above product is compact, and thus $r^{-1}(U)$ is compact. So it remains to show that the restriction of the source map to $r^{-1}(U)$ is a local homeomorphism. This is easy, since the source map acts like the source map for $\Q$ on the first factor and the identity on the second factor, so the restriction is the coordinatewise product of a local homeomorphism and a global homeomorphism.

\end{proof}

\begin{lem}\label{kat commutes}
The diagram 
\begin{equation*}
\begin{tikzcd}
J_X\ar["\phi_A" swap]{d}\ar["\nu"]{r}&M(B;J_Y)\ar["\bar{\phi}_B|"]{d}\\
\mathbb{K}(X)\ar["\mu^{(1)}", swap ]{r}&M_B(\mathbb{K}(Y)),\\
\end{tikzcd}
\end{equation*}
 commutes. 
\end{lem}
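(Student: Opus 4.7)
The plan is to apply both $\overline{\phi_B}\circ\nu$ and $\mu^{(1)}\circ\phi_A$ at an element $f\in J_X$ to a test function $\zeta \in C_c(F^1)\subset Y$ and check equality in $Y$. Density of $C_c(F^1)$ in $Y$ will then upgrade the identity to equality of multipliers in $M_B(\mathbb{K}(Y))$.

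The core technical step is a \emph{slice identity}: for $\zeta \in C_c(F^1)$ and each $h \in G$, the partial function $\zeta(\cdot, h)$ lies in $C_c(E^1)\subset X$, and for every $T \in \mathbb{K}(X)$,
\[
\bigl(\mu^{(1)}(T)\zeta\bigr)(e, h) \;=\; \bigl(T\,\zeta(\cdot, h)\bigr)(e).
\]
I would first verify this for a rank-one operator $T = \theta_{\xi, \eta}$ by unwinding $\mu^{(1)}(\theta_{\xi, \eta}) = \mu(\xi)\mu(\eta)^*$ using the explicit formulas for $\mu$ and $\mu^*$ from the preceding propositions: both sides become $\xi(e)\int \overline{\eta(\alpha)}\,\zeta(\alpha, h)\,d\lambda_{s(e)}(\alpha)$. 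Linearity extends this to finite sums, and a density/continuity argument (most cleanly framed by pairing both sides against an arbitrary $\chi \in C_c(F^1)$ under the $B$-valued inner product, where the maps $T\mapsto \langle \mu^{(1)}(T)\zeta, \chi\rangle$ and $T\mapsto \langle T\zeta(\cdot, g), \chi(\cdot, g)\rangle$ are both norm continuous) extends the identity to all of $\mathbb{K}(X)$.

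Once the slice identity is in hand, the diagram's commutativity follows quickly. Taking $T = \phi_A(f)$, which lies in $\mathbb{K}(X)$ by definition of $J_X$, and using $\phi_A(f)\chi = (f\circ r)\chi$ yields
\[
\bigl(\mu^{(1)}(\phi_A(f))\zeta\bigr)(e, h) \;=\; f(r(e))\,\zeta(e, h).
\]
On the other side, $G$-equivariance of the range map gives $q\circ r_\kappa = r\circ q$, so
\[
\bigl(\overline{\phi_B}(\nu(f))\zeta\bigr)(e, h) \;=\; f\bigl(q(r_\kappa(e, h))\bigr)\zeta(e, h) \;=\; f(r(e))\,\zeta(e, h),
\]
which matches the previous expression. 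The only nontrivial point in the whole argument is the continuity-based extension of the slice identity from rank-one operators to arbitrary compact operators, since pointwise evaluation is not norm continuous on $Y$ in general, and this is precisely where I would invest the most care.
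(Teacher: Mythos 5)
Your proof is correct and follows essentially the same route as the paper's: the paper likewise reduces to the rank-one computation $\mu^{(1)}(\theta_{\eta_k,\xi_k})y(e,g)=\theta_{\eta_k,\xi_k}(y(\cdot,g))(e)$, identifies $\overline{\phi_B}|\circ\nu(f)$ as multiplication by $f\circ r\circ q = f\circ q\circ r_\kappa$, and passes from finite-rank operators to $\phi_A(f)$ by a norm-continuity argument. The only difference is packaging: the paper fixes a finite-rank $\Theta$ with $\|\phi_A(f)-\Theta\|<\epsilon$ and runs a direct $2\epsilon$-estimate on $\|\mu^{(1)}(\Theta)y-(\overline{\phi_B}|\circ\nu)(f)y\|$ via $\sup_G\|(\Theta-\phi_A(f))y(\cdot,g)\|$, rather than first establishing your slice identity for all of $\mathbb{K}(X)$.
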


\begin{proof}
Choose $f\in J_X$, and fix $\epsilon>0$. Then there is a finite-rank operator $\Theta=\sum \theta_{\eta_k, \xi_k}\in \bK(X)$ with $\|\phi_A(f)-\Theta\|<\epsilon$. Now, for any $y\in C_c(F^1)$, we compute that 
\begin{align*}
&\left\|[\mu^{(1)}\circ \phi_A(f)]y-[\bar{\phi}_B|\circ\nu]y\right\|\\
&\leq\left\|[\mu^{(1)}\circ\phi_A(f)]y-[\mu^{(1)}(\Theta)]y\right\|+\left\|\mu^{(1)}(\Theta)y-[\bar{\phi}_B|\circ \nu]y\right\|\\
&\leq \epsilon\|y\| + \left\|\mu^{(1)}(\Theta)y-[\bar{\phi}_B|\circ \nu]y\right\|,
\end{align*}
so that what remains is to bound the second addend. To this end, we see 
\[\mu^{(1)}(\Theta)y = \mu^{(1)}(\sum\theta_{\eta_k, \xi_k})y = \left[\sum\mu(\eta_k)\mu(\xi_k)^*\right]y,\]
and by evaluating pointwise, that 
\begin{align*}
\mu^{(1)}(\Theta)y(e,g) &= \sum \eta_k\circ q(e, g)\cdot \int \overline{\xi_k\circ q}(\alpha,g)y(\alpha, g)\,d\lambda_{s(e), g}\\
&=\sum \eta_k(e)\cdot \int \overline{\xi_k}(\alpha) y(\alpha, g)\,d\lambda_{s(e)} = \left[\sum \eta_k\langle\xi_k, y(\cdot, g)\rangle\right](e)\\
&=\Theta(y(\cdot, g))(e).
\end{align*}
On the other hand, 
\begin{align*}
[\bar{\phi}_B|\circ\nu](f)y(e,g) &= [\bar{\phi}_B| (f\circ q) y](e,g) = (f\circ q \circ r)(e,g)\cdot y(e,g) \\
&= (f\circ r\circ q)(e, g)\cdot y(e,g)  = [\phi_A(f) y(\cdot, g)] (e), \\
\end{align*}
from which it follows that 
\begin{align*}
\mu^{(1)}(\Theta) y(e,g) - (\bar{\phi}_B|\circ \nu)(f) y(e,g) &= \Theta(y(\cdot, g))(e) - [\phi_A(f) y(\cdot, g)](e)\\
& = ([\Theta-\phi_A(f)]y(\cdot, g))(e).
\end{align*}
So we compute 
 \begin{align*}
 &\|\mu^{(1)}(\Theta)-(\bar{\phi}_B|\circ \nu)(f)y\|^2\\
 & = \sup_{F^0}\langle\mu^{(1)}(\Theta)-(\bar{\phi}_B|\circ \nu)(f)y, \mu^{(1)}(\Theta)-(\bar{\phi}_B|\circ \nu)(f)y\rangle(v, g)\\
 & = \sup_{E^0, G}\langle(\Theta-\phi_A)(f)y(\cdot, g), (\Theta-\phi_A)(f)y(\cdot, g)\rangle(v)\\
 & = \sup_G  \|(\Theta-\phi_A)(f)y(\cdot, g)\|^2\\
 & \leq \sup_G \|(\Theta-\phi_A)(f)\|^2\|y(\cdot, g)\|^2\\
\end{align*}
Taking square roots,

\begin{align*}
&\left\|[\mu^{(1)}\circ \phi_A(f)]y-[\bar{\phi}_B|\circ\nu]y\right\|\\
&\leq\left\|[\mu^{(1)}\circ\phi_A(f)]y-\mu^{(1)}(\Theta)]y\right\|+\left\|\mu^{(1)}(\Theta)y-[\bar{\phi}_B|\circ \nu]y\right\|
\leq 2\epsilon\|y\|
\end{align*}

\end{proof}

We can now show that $(\mu, \nu)$ is Cuntz-Pimsner covariant. 

\begin{proof}[Proof of Proposition \ref{CPcov}]

$\nu$ is clearly nondegenerate, and Remark \ref{relativemultiplier} shows that $\mu:X\to M_B(Y)$, satisfying the first two conditions. Lemmas \ref{kat mult} and \ref{kat commutes} supply the next two conditions of Definition \ref{CPcovcor}, so that $(\mu,\nu)$ is a Cuntz-Pimsner covariant correspondence homomorphism. 

The existence of $\Pi$ is now a consequence of \cite{kqrfunctor}*{Corollary 3.6}.
\end{proof}


Having completed the construction of $\Pi$ by Proposition \ref{CPcov}, we return to the Diagram \ref{integratedform}, and introduce the homomorphism $\psi$. Since $M(\cs(\Q\times_\kappa G))$ contains a copy of $M(C_0(F^0))$, we define $\psi:C_0(G)\to M(C_0(F^0))$ by $\psi(\gamma)=\gamma\circ q_2$, where $q_2$ is the projection of $F^0$ onto the second factor. Then the composition $k_B\circ \psi:C_0(G)\to M(\cs(\Q\sk G))$ induces a coaction, which we denote by $\delta^\psi$.

\begin{prop}\label{covariant}
The pair $(\Pi, k_B\circ\psi)$ is a covariant representation for the coaction $(\cs(\Q), \zeta)$ in $M(\cs(\Q\times_\kappa G))$, so we have a unique integrated form $\cs(\Q)\rtimes_\zeta G\to M(\cs(Q\times_\kappa G))$, which we will denote by $\Pi\times\psi.$  
\end{prop}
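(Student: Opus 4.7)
The plan is to invoke the universal property of the coaction crossed product, for which I must verify that $(\Pi, k_B\circ\psi)$ is a genuine covariant representation of $(\cs(\Q), \zeta)$: both components need to be nondegenerate homomorphisms, and the covariance square
\[
\delta^{k_B\psi}\circ \Pi = (\Pi\otimes\id)\circ \zeta
\]
must commute. Nondegeneracy is immediate: $\Pi$ is the functorial image of the nondegenerate Cuntz--Pimsner covariant homomorphism produced in Proposition~\ref{CPcov}, while $\psi(\gamma) = \gamma\circ q_2$ is nondegenerate from $C_0(G)$ into $M(C_0(F^0))$ because compactly supported functions on $F^0 = E^0\times G$ can be approximated by products $(\gamma\circ q_2)\cdot f$ via Urysohn, and post-composition with the nondegenerate $k_B$ preserves this. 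By nondegeneracy, it suffices to verify commutativity of the covariance square on the generating sets $k_A(A)$ and $k_X(X)$.

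For $k_A(f)$ with $f\in A$, the coaction is trivial on the coefficient algebra: $\zeta(k_A(f)) = k_A(f)\otimes 1$, so $(\Pi\otimes\id)\zeta(k_A(f)) = k_B(\nu(f))\otimes 1$. On the other side, $\delta^{k_B\psi}(k_B(\nu(f)))$ is obtained by conjugating $k_B(\nu(f))\otimes 1$ by $(k_B\psi\otimes\id)(w_G)$; but $\nu(f) = f\circ q$ and $\psi(\gamma) = \gamma\circ q_2$ depend on disjoint coordinates of $F^0 = E^0\times G$ and hence commute in the commutative \cs-algebra $B$, so the conjugation is trivial and both sides agree. For $k_X(\xi)$ with $\xi\in X$, the defining formula $\zeta(k_X(\xi)) = (k_X\otimes\id)(v(\xi\otimes 1))$ with $v = \overline{\kappa^*\otimes\id}(w_G)$, combined with the functoriality identity $\Pi\circ k_X = k_Y\circ\mu$, converts the right-hand side into an expression that at each edge $(e,g)\in F^1$ twists $\xi(e)$ by a factor $\iota(\kappa(e))$. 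The inner coaction $\delta^{k_B\psi}$ applied to $k_Y(\mu(\xi))$ is governed by the analogous twist by $(k_B\psi\otimes\id)(w_G)$: since the range and source maps for $\Q\sk G$ send $(e,g)$ to $(r(e),\kappa(e)g)$ and $(s(e),g)$ respectively, the net multiplier that appears from conjugation evaluates pointwise to $\iota(\kappa(e)g)\iota(g)^{-1} = \iota(\kappa(e))$, matching the previous expression edgewise.

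The main obstacle will be bookkeeping with the unitary $w_G\in UM(C_0(G)\otimes\cs(G))$ and with the various strict extensions hidden in the correspondence-multiplier notation, specifically in confirming that $(\mu\otimes\id)(v)$ and $(k_B\psi\otimes\id)(w_G)$ act identically on $k_Y(\mu(\xi))\otimes 1$ inside $M(\cs(\Q\sk G)\otimes\cs(G))$. Once this pointwise identification is made rigorous in the $C_b(F^0, M^\beta(\cs(G)))$ picture of multiplier algebras, the covariance condition is established, and the universal property of $\cs(\Q)\rtimes_\zeta G$ delivers the unique integrated form $\Pi\times\psi$ filling in Diagram~\eqref{integratedform}.
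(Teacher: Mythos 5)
Your proposal follows the same route as the paper's proof: reduce covariance to the generators $k_A(A)$ and $k_X(X)$, observe that the conjugation by $\overline{k_B\circ\psi\otimes\id}(w_G)$ is trivial on $k_B(\nu(a))\otimes 1$, and verify the edge case by the pointwise computation in $C_b(F^0,M^\beta(\cs(G)))$ yielding the twist $\iota(\kappa(e)g)\iota(g)^{-1}=\iota(\kappa(e))$, which matches $\overline{\mu\otimes\id}(v\cdot(x\otimes 1))$. This is exactly the paper's argument (the paper omits the explicit nondegeneracy check you include), so the proposal is correct and essentially identical in approach.
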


\begin{proof}
Our objective is to show that the diagram 
\begin{equation*}
\begin{tikzcd}
\cs(\Q)\ar["\zeta"]{r}\ar["\Pi", swap]{d} 	&M(\cs(\Q)\otimes\cs (G) )\ar["\overline{\Pi\times\id}"]{d} \\
M(\cs(\Q\times_\kappa G))\ar["\delta^\psi", swap]{r}	&M(\cs(\Q\times_\kappa G)\otimes\cs (G) ) \\
\end{tikzcd}
\end{equation*}
 commutes. Since $\cs(\Q)$ is generated by the images of the canonical correspondence maps $(k_X, k_A)$, it is enough to confirm separately that 
 \begin{align*}
 \overline{\Pi\times \id}\circ \zeta\circ k_A &= \delta^\psi\circ\Pi\circ k_A\\
 \overline{\Pi\times \id}\circ \zeta\circ k_X &= \delta^\psi\circ\Pi\circ k_X.\\
 \end{align*}

 Beginning with the first computation, we witness 
\begin{align*}
\overline{\Pi\times \id}\circ \zeta\circ k_A(a) &= \overline{\Pi\times\id}\circ \overline{k_A\times\id} (a\otimes 1) = \overline{\Pi\circ k_A\otimes\id}(a\otimes 1)\\
&=\overline{k_B\circ\nu\otimes\id}(a\otimes 1) = \overline{k_B\otimes \id} (\nu(a)\otimes 1), 
\end{align*}
whose argument we identify as the operator valued function $\nu(a)\otimes 1\in C_b(F^0, M^\beta(\cs (G) ))$ assigning $(v, g)\mapsto a(v)\iota(1)$.  Attending to this argument $\nu(a)\otimes 1$, we compute pointwise
\begin{align*}
\nu(a)\otimes1(v, g) &= a(v)\iota(1)= a(v)\iota(u_gu_g^*) = 1\otimes u_g\cdot a(v)\otimes 1\cdot 1\otimes u_g^*\\
&= [\overline{\psi\otimes\id}(\omega_G)\cdot a\otimes1\cdot\overline{\psi\otimes\id}(\omega_G)^*](v, g).
\end{align*}
Then, since $\overline{k_B\otimes \id}$ is a homomorphism, follows that $\overline{\Pi\times\id}\circ \zeta\circ k_A = \delta^\psi\circ \Pi\circ k_A$.\\

We follow up with the second computation, which is a bit more delicate. Observe that 
\begin{align*}
&\delta^{\psi}\circ\Pi\circ k_X(x)\\
=&\delta^\psi\circ k_Y\circ\mu(x)\\
=&\overline{k_B\circ\psi\otimes\id}(\omega_G)
\big(k_Y(\mu(x))\otimes1\big)
\overline{k_B\circ\psi\otimes\id}(\omega_G)^*\\
=&\overline{k_B\otimes\id}\circ\overline{\psi\otimes\id}(\omega_G) 
\big(k_Y(\mu(x))\otimes 1\big) 
\overline{k_B\otimes\id}\circ\overline{\psi\otimes\id}(\omega_G)^*\\
=&\overline{k_B\otimes\id}\circ\overline{\psi\otimes\id}(\omega_G)
\big(\overline{k_Y\otimes\id}(\mu(x)\otimes 1)\big) 
\overline{k_B\otimes\id}\circ\overline{\psi\otimes\id}(\omega_G)^*\\
=&\overline{k_Y\otimes\id}\Big[\overline{\psi\otimes\id}(\omega_G)
\big(\mu(x)\otimes 1\big)
\overline{\psi\otimes\id}(\omega_G)^*\Big],
\end{align*}
where the last line follows since $(k_Y, k_B)$ is a Toeplitz representation. Now, isolating the argument (which is a multiplier of $Y\otimes \cs (G)$), we calculate pointwise for the elementary tensor $b\otimes\gamma\in B\otimes \cs (G)$
\begin{align*}
&\overline{\psi\otimes\id}(\omega_G)(\mu(x)\otimes 1)\overline{\psi\otimes\id}(\omega_G)^* (b\otimes\gamma)(e, g)\\
=&\overline{\psi\otimes\id}(\omega_G)(r(e),\kappa(e)g)\cdot x(e)1\cdot\overline{\psi\otimes\id}(\omega_G)^*(s(e), g) b(s(e), g)\gamma\\
=&u_{\kappa(e)g}\cdot x(e)1\cdot u_{g}^*\cdot b(s(e))\gamma\\
=&x(e)u_{\kappa(e)g}u_g^*\cdot b(s(e), g)\gamma\\
=&x(e)u_{\kappa(e)}\cdot b(s(e), g)\gamma\\
=&(v\cdot(x\otimes 1))\circ q(e, g)\cdot b\otimes\gamma(e, g)\\
=&\overline{\mu\otimes\id}(v\cdot(x\otimes 1))(e, g)\cdot b\otimes\gamma(e, g).\\
\end{align*}
Thus, the argument agrees with $\overline{\mu\otimes\id}(v\cdot(x\otimes 1))$ on elementary tensors, and consequently for all of $B\otimes\cs (G)$. 

With this in hand, we observe that 
\begin{align*}
&\delta^\psi\circ\Pi\circ k_X(x) \\
=& \overline{k_Y\otimes\id}\left[\overline{\mu\otimes\id}(v\cdot(x\otimes 1))\right] \\
=& \overline{k_Y\circ\mu\otimes\id}(\sigma(x))\\
=&\overline{\Pi\circ k_X\otimes\id}(\sigma(x)) \\
=& \overline{\Pi\otimes\id}\circ\overline{k_X\otimes\id}\circ\sigma(x) \\
=& \overline{\Pi\otimes\id}\circ\zeta\circ k_X(x)\\
\end{align*}
and we thus conclude that the pair $(\Pi, k_B\circ\psi)$ is a covariant representation of the coaction $(\cs (\Q), \zeta)$.
\end{proof}

It remains for us to show that the integrated form $\Pi\times \psi$ is an isomorphism. We will show that the image of $\Pi\times\psi$ contains all of the generators of $\cs(\Q\times_\kappa G)$ to deduce surjectivity, and employ the Dual Invariant Uniqueness Theorem \cite[Theorem 5.4]{boiler} to prove injectivity. 

\begin{prop}\label{surjective}
The map $\Pi\times \psi: \cs(\Q)\times_\zeta G\to \cs(\Q\times_\kappa G)$ is surjective. 
\end{prop}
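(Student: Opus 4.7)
My plan is to show that the image of $\Pi\times\psi$ contains a dense subalgebra of $C^*(\mathcal{Q}\times_\kappa G)$ by hitting its generating sets $k_Y(Y)$ and $k_B(B)$. Since the crossed product $C^*(\mathcal{Q})\times_\zeta G$ is the closed linear span of the products $j_{C^*(\mathcal{Q})}(a)\cdot j_G(\gamma)$ with $a\in C^*(\mathcal{Q})$ and $\gamma\in C_0(G)$, the image of $\Pi\times\psi$ is the closed linear span of products $\Pi(a)\cdot (k_B\circ\psi)(\gamma)$ in $M(C^*(\mathcal{Q}\times_\kappa G))$. The task is therefore to exhibit elements of $k_B(B)$ and $k_Y(Y)$ of this form.

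First I would handle the vertex algebra. For $f\in A=C_0(E^0)$ and $\gamma\in C_0(G)$, the product
\[
\Pi(k_A(f))\cdot (k_B\circ\psi)(\gamma)=k_B(f\circ q)\cdot k_B(\gamma\circ q_2)=k_B\bigl((f\circ q)(\gamma\circ q_2)\bigr)
\]
is the image under $k_B$ of the function $(v,g)\mapsto f(v)\gamma(g)$. Since $C_0(E^0)\odot C_0(G)$ is dense in $C_0(E^0\times G)=B$ by a standard Stone--Weierstrass argument, the closed linear span of these products is $k_B(B)$, giving the first inclusion.

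Next I would handle the correspondence generators. For $x\in C_c(E^1)$ and $\gamma\in C_c(G)$, using that $\Pi\circ k_X=k_Y\circ\mu$ and that $k_B\circ\psi(\gamma)\in M(C^*(\mathcal{Q}\times_\kappa G))$ acts on $k_Y$ via the right $B$-action, I would compute
\[
\Pi(k_X(x))\cdot (k_B\circ\psi)(\gamma)=k_Y\bigl(\mu(x)\cdot(\gamma\circ q_2)\bigr).
\]
Unpacking the definitions of $\mu$ and $\psi$, the element $\mu(x)\cdot(\gamma\circ q_2)\in Y$ is precisely the function $(e,g)\mapsto x(e)\gamma(g)$, since $q_2\circ s_\kappa$ projects onto the $G$-coordinate. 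Another Stone--Weierstrass argument on $C_c(E^1\times G)=C_c(F^1)$, together with the fact that $C_c(F^1)$ is dense in $Y$ with respect to the Hilbert $B$-module norm, shows that such products span a dense subspace of $k_Y(Y)$.

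Finally, I would conclude: the closure of the image of $\Pi\times\psi$ is a $C^*$-subalgebra of $M(C^*(\mathcal{Q}\times_\kappa G))$ containing both $k_Y(Y)$ and $k_B(B)$, which together generate $C^*(\mathcal{Q}\times_\kappa G)$, so $\Pi\times\psi$ is surjective. The only mildly delicate step is the identification of $\mu(x)\cdot(\gamma\circ q_2)$ as the concrete function $(e,g)\mapsto x(e)\gamma(g)$; but this follows at once from the explicit formula for $\mu$ given in Lemma~\ref{bounded} and Remark~\ref{relativemultiplier}. No further obstacles arise.
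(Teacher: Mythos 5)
Your proposal is correct and follows essentially the same route as the paper: both compute $\Pi(k_A(a))\,(k_B\circ\psi)(\gamma)=k_B(a\otimes\gamma)$ and $\Pi(k_X(x))\,(k_B\circ\psi)(\gamma)=k_Y(x\otimes\gamma)$, and then conclude by density of such elementary products in $B$ and $Y$ and the fact that $k_B(B)$ and $k_Y(Y)$ generate $\cs(\Q\times_\kappa G)$. Your explicit identification of $\mu(x)\cdot(\gamma\circ q_2)$ as $(e,g)\mapsto x(e)\gamma(g)$ is exactly the content the paper compresses into its ``linearity and density'' step, so no gap remains.
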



\begin{proof}
As stated above, we need only show that $\Pi\times\psi$ is surjective on the images of the generators $k_B(C_c(F^0)), k_Y(C_c(F^1))$. We check for $a\in A$ and $f\in C_0(G)$ that
\begin{align*}
&\quad\Pi\times\psi(j_{\cs\Q}(k_A(a))j_G(f))\\
&= \Pi(k_A(a))k_B(\psi(f))\\
&= k_B(\nu(a))k_B(\psi(f)) = k_B(\nu(a)\psi(f))\\
&= k_B(a\otimes1\cdot 1\otimes f) = k_B(a\otimes f).
\end{align*}
 A similar computation for $x\in C_c(E^1)$ shows 
\begin{align*}
&\quad\Pi\times\psi(j_\Q(k_X(x))j_G(f))\\
&=\Pi(k_X(x))k_B(\psi(f))\\
&=k_Y(\mu(x))k_B(\psi(f))\\
&=k_Y(x\otimes1\cdot 1\otimes f)\\
&=k_Y(x\otimes f),
\end{align*}
Taking these equations together, we see that products such as 
\[k_B(a\otimes f)k_Y(x\otimes \phi)\]
are contained in the image of $\Pi\times\psi.$ An argument in linearity and density now shows that the integrated form is surjective. 
\end{proof}

\begin{prop}\label{injective}
$\Pi\times\psi$ is injective.
\end{prop}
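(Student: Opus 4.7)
The plan is to apply the Dual Invariant Uniqueness Theorem \cite[Theorem 5.4]{boiler}, as signaled in the remark preceding Proposition \ref{CPcov}. In the form we need it, this theorem reduces injectivity of $\Pi\times\psi$ to two tasks: equipping the target $\cs(\Q\sk G)$ with an action $\beta$ of $G$ so that $\Pi\times\psi$ intertwines the dual action $\hat\zeta$ with $\beta$, and verifying injectivity of $\Pi$ itself. For the target action, Theorem \ref{classify} supplies a free and proper $G$-action on $\Q\sk G$ by right translation on the second coordinate, realized by continuous quiver automorphisms $\alpha_g=(\alpha_g^0,\alpha_g^1)$; by functoriality of the Cuntz--Pimsner construction \cite{kqrfunctor}, these lift to an action $\beta:G\to\text{Aut}(\cs(\Q\sk G))$ determined on generators by $\beta_g(k_B(f))=k_B(f\circ\alpha_g^0)$ and $\beta_g(k_Y(\eta))=k_Y(\eta\circ\alpha_g^1)$.

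The intertwining property is then a calculation on generators of $\cs(\Q)\times_\zeta G$. Since $\hat\zeta_g$ fixes the image of $j_{\cs(\Q)}$, I need $\beta_g\circ\Pi=\Pi$; this holds because the orbit map $q:F^i\to E^i$ is $G$-invariant, so both $\mu(\xi)=\xi\circ q$ and $\nu(a)=a\circ q$ are fixed by $\beta_g$. For the other generator, $\hat\zeta_g(j_G(f))=j_G(\rt_g f)$, and since $\psi(f)=f\circ q_2$ with $q_2:F^0\to G$ the projection onto the $G$-factor, right translation on $F^0$ composes with $\psi$ to produce $\psi(\rt_g f)$, as required. For injectivity of $\Pi$, I would invoke the Gauge-Invariant Uniqueness Theorem for topological quivers \cite{quiver}: the gauge actions of $\mathbb{T}$ on $\cs(\Q)$ and $\cs(\Q\sk G)$ are intertwined by $\Pi$, since $\mu$ takes values in the degree-one part of $M(\cs(\Q\sk G))$ and $\nu$ lands in the gauge-fixed part. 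Moreover $\Pi\circ k_A=k_B\circ\nu$ is injective because $\nu$ is injective (as $q$ is surjective) and $k_B$ is faithful on $C_0(F^0)$, so GIUT extends the injectivity to all of $\cs(\Q)$.

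The main obstacle, I expect, is the fine print of \cite[Theorem 5.4]{boiler}: depending on its exact formulation, it may demand that $\zeta$ be normal or maximal, or that $\beta$ satisfy some amenability or exteriority hypothesis. Normality of $\zeta$ should come a posteriori from the injectivity of $\Pi$ established above, since $\Pi$ factors through $j_{\cs(\Q)}$. Any amenability condition on $\beta$ should flow from the freeness and properness of the underlying $G$-action on $\Q\sk G$, paralleling the line of argument used in Proposition \ref{amenableaction}. Once those hypotheses are confirmed, \cite[Theorem 5.4]{boiler} delivers injectivity of $\Pi\times\psi$, which combined with Proposition \ref{surjective} yields the isomorphism of Theorem \ref{isomorphism}.
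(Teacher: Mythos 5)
Your proposal follows essentially the same route as the paper: both apply the Dual Invariant Uniqueness Theorem of \cite{boiler}, establish injectivity of $\Pi$ via the intertwined gauge actions together with injectivity of $\nu$, and supply the required $G$-action $\beta$ by right translation on $(Y,B)$, which fixes $\Pi$ and translates $\psi$. Your concern about extra hypotheses in \cite{boiler}*{Theorem 5.4} is unfounded --- the condition $\ker\Pi=\ker j_{\cs(\Q)}$ plus the action $\beta$ suffice, and normality of $\zeta$ is indeed deduced afterwards exactly as you suggest (it is the paper's Corollary \ref{normal}).
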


\begin{proof}
We apply \cite[Theorem 5.4]{boiler}. First, this entails the covariant representation $(\Pi, \psi)$ in $M(\cs(\Q\times_\kappa G))$ with $\text{ker }\Pi=\text{ker }j_\Q$. Indeed, the gauge action on the skew product quiver intertwines the gauge action on $\cs(\Q)$, so since $\nu$ is injective, it follows that $\Pi$ is injective.  Then, since $\Pi=(\Pi\times\psi)\circ j_\Q$, it also follows that $j_\Q$ is also injective, so that the kernels agree. 

Next, we need an action $\beta$ of $G$ on $\cs(\Q\times_\kappa G)$ whose strict extension fixes $\Pi$ and acts by right translation on $\psi$. Define $\beta_h$ as the action determined on $\cs(\Q\times_\kappa G)$ by right translation on $(Y, B)$. Then $\overline{\beta_h}(\Pi) = \Pi$, while $\overline{\beta_h}(\psi(f)) =\psi(\text{rt}_hf) $, satisfying the hypotheses of the theorem. It then follows that $\Pi\times\psi$ is injective.  
 \end{proof}

In the course of proving Proposition \ref{injective}, we deduced an auxilary result. 

 \begin{cor}\label{normal}
The coaction $\zeta$ is normal. 
 \end{cor}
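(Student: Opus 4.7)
The plan is to unpack the definition of a normal coaction and recycle the injectivity argument already established inside the proof of Proposition \ref{injective}. By definition, $\zeta$ is normal precisely when the canonical map $j_{\cs(\Q)}\colon \cs(\Q)\to \cs(\Q)\times_\zeta G$ is injective, so the whole task reduces to verifying injectivity of this single homomorphism.

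My first step is to recall the factorization relation supplied by the universal property of the crossed product, namely
\[
\Pi = (\Pi\times\psi)\circ j_{\cs(\Q)},
\]
which holds because $(\Pi, k_B\circ\psi)$ is the covariant representation integrated to $\Pi\times\psi$ in Proposition \ref{covariant}. Next I would point to the proof of Proposition \ref{injective}, where it was observed that $\Pi$ is injective: the gauge action on $\cs(\Q\sk G)$ intertwines the gauge action on $\cs(\Q)$, and $\nu$ is injective (so the Cuntz-Pimsner covariant homomorphism $(\mu,\nu)$ produces an injective functorial map). Combining these two facts, injectivity of $\Pi$ forces injectivity of the first factor $j_{\cs(\Q)}$ in the composition, giving exactly the definition of normality for $\zeta$.

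There is essentially no obstacle here once Proposition \ref{injective} is in place; the corollary is really a by-product of the argument rather than a new theorem. The only thing to be a little careful about is to make clear that the injectivity of $\Pi$ was genuinely proved, not merely assumed as an input, so that invoking it to deduce injectivity of $j_{\cs(\Q)}$ is not circular. In particular, the injectivity of $\Pi$ rests on the gauge-equivariance plus injectivity of $\nu\colon A\to M(B)$ (which is transparent since $\nu(f)=f\circ q$ and the orbit map $q$ is surjective), and does \emph{not} rely on the normality of $\zeta$ anywhere. Thus a two-line proof suffices: apply $\Pi=(\Pi\times\psi)\circ j_{\cs(\Q)}$ and injectivity of $\Pi$ to conclude that $j_{\cs(\Q)}$ is injective, hence $\zeta$ is normal.
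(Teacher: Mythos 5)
Your proof is correct and is essentially identical to the paper's: the paper's own argument for this corollary simply points back to the observation in the proof of Proposition \ref{injective} that $j_{\cs(\Q)}$ is injective, which was deduced there exactly as you do, from the factorization $\Pi=(\Pi\times\psi)\circ j_{\cs(\Q)}$ together with the injectivity of $\Pi$. Your added remark about non-circularity is a sensible clarification but does not change the substance of the argument.
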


 \begin{proof}
In the proof above, we noticed that $j_Q$ is injective, which is the definition of normality of the coaction. 
 \end{proof}

We are now prepared to prove the main theorem. 

 \begin{proof}[Proof of Theorem \ref{isomorphism}]
Proposition \ref{covariant} populates the diagram \ref{integratedform}, while Propositions \ref{injective} and \ref{surjective} demonstrate that the integrated form $\Pi\times\psi$ produces the desired isomorphism. 
 \end{proof}

\section{Application to Duality}\label{amenable}

Topological quivers are quite abundant, as the following proposition shows. 
\begin{prop}
Suppose that $X, Y$ are two second countable locally compact Hausdorff spaces, and suppose that $f, g:X\to Y$ are both continuous maps. If $f$ is open and surjective, then there is a topological quiver with vertex space $Y$ and edge space $X$. 
\end{prop}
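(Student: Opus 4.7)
The plan is to set $E^0 := Y$ and $E^1 := X$ with range map $r := g$ and source map $s := f$. The local compactness and Hausdorff conditions on $E^0, E^1$, the continuity of $r$ and $s$, and the openness of $s$ are then immediate from the hypotheses. So the entire content of the proof is to construct a full continuous $s$-system $\lambda = \{\lambda_v\}_{v \in Y}$, and this is the main obstacle.

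For the existence of such a system I would appeal to a general result on continuous open surjections between second countable locally compact Hausdorff spaces: any such surjection admits a full continuous system of positive Radon measures. Results of this type are recorded in \cite{wil:toolkit} and the references therein. The second countability hypothesis is essential to the construction, as it guarantees that $X$ is both metrizable and $\sigma$-compact, which in turn permits the countable partition-of-unity arguments that produce the measures.

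In outline, one chooses a countable locally finite cover $\{U_n\}$ of $X$ by precompact open sets together with a subordinate partition of unity $\{\varphi_n\} \subset C_c(X)^+$. Openness of $f$ allows one to construct, on each $U_n$, positive linear functionals on $C_c(X)$ that depend continuously on $v \in f(U_n)$; these are then summed against $\{\varphi_n\}$ to produce the measures $\lambda_v$ globally on $Y$. Fullness of the resulting system, namely $\supp \lambda_v = f^{-1}(v)$ for every $v \in Y$, follows because the partition of unity is strictly positive on $X$ and $f$ is surjective, so every point of every fiber is nontrivially weighted. Continuity of the assignment $v \mapsto \int \xi \, d\lambda_v$ for $\xi \in C_c(X)$ is then verified by a standard dominated convergence argument, together with the openness of $f$, which is exactly what ensures that the fibers vary continuously in the topology on $Y$. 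Once $\lambda$ is in hand, all four bullets of the topological quiver definition are satisfied, completing the proof.
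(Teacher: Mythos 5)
Your proof takes essentially the same route as the paper: set the quiver data to $(Y, X, g, f, \lambda)$ and reduce everything to the existence of a full continuous $f$-system, which the paper obtains by directly citing Blanchard's theorem \cite{blanchard} --- the same result on open surjections of second countable locally compact Hausdorff spaces that you invoke via \cite{wil:toolkit} and its references. Your supplementary partition-of-unity sketch is fine as a globalization device, but be aware that its local step (producing measures on the fibers over $f(U_n)$ varying continuously) is precisely where the nontrivial content of Blanchard's theorem lives, so in both proofs the citation is doing the real work.
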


\begin{proof}
This is an immediate consequence of Blanchard's Theorem \cite{blanchard}. Since $f$ is an open and surjective map, there is an associated $f$-system of probability measures $\lambda=\{\lambda_y\}_{y\in Y}$. Then the ensemble $(Y, X, g, f, \lambda)$ is the desired topological quiver. 
\end{proof}

In particular, topological quivers generalize directed graphs (where the spaces above are discrete), as well as topological graphs where the source map is a local homeomorphism. Theorem \ref{isomorphism} thus recovers \cite{kqr:graph}*{Theorem 2.4} as well as \cite{tgcoact}*{Theorem 3.1}. We mention that the approach used in this present article constructs the isomorphism $\cs\Q\times_\zeta G\cong \cs(\Q\sk G)$ with our bare hands, a departure from the technique of \cite{tgcoact} which employs Landstad duality to recover the coaction, then shows this coaction is equivalent to the constructed one. Presumably, this technique would work just as well. 

We have shown that the coaction $\zeta$ associated to a cocycle on a topological quiver is injective, and consequently a normal coaction. As mentioned in the preliminaries, it is possible for a coaction to be simultaneously normal and maximal, and Proposition \ref{amenableaction} shows that this has strong implications for the dual action. So far, we have been unable to demonstrate the maximality of $\zeta$, so we offer the claim here as a conjecture. 

\begin{conj}
    If $\Q$ is a topological quiver equipped with a cocycle $\kappa:E^1\to G$, over a locally compact group, then the natural coaction $\zeta$ described in Proposition \ref{coaction} is maximal. Consequently, the regular representation in Proposition \ref{amenableaction} is an isomorphism.
\end{conj}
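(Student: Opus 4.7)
My plan is to establish maximality of $\zeta$ by reducing, through Theorem \ref{isomorphism}, to a structural isomorphism at the level of the skew product $\cs$-algebra. By definition, $\zeta$ is maximal precisely when the canonical surjection
\[
\cs(\Q)\rtimes_\zeta G\rtimes_{\hat\zeta}G \longrightarrow \cs(\Q)\otimes\bK(L^2(G))
\]
is injective. Combining Theorem \ref{isomorphism} with an identification of $\hat\zeta$, this will be reformulated as proving
\[
\cs(\Q\sk G)\rtimes_\beta G\ \cong\ \cs(\Q)\otimes\bK(L^2(G)),
\]
where $\beta$ denotes the action of $G$ on $\cs(\Q\sk G)$ induced by right translation on the second factor of the skew product quiver.

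The first step is bookkeeping: I would verify that, transported through $\Pi\times\psi$, the dual coaction $\hat\zeta$ coincides with $\beta$. The action $\hat\zeta_s$ fixes $j_{\cs(\Q)}$ and translates $j_G$ by $s$, while $\beta_s$ fixes the image of $\Pi$ (functions constant in the $G$-coordinate, built from $k_B(\nu(A))$ and $k_Y(\mu(X))$) and translates the image of $\psi:C_0(G)\to M(C_0(F^0))$ by $s$. Matching these intertwines the two constructions and transports the double crossed product identification to the desired skew-product side.

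The main step is to establish the structural isomorphism. At the coefficient level, $B=A\otimes C_0(G)$ with $\beta$ acting by right translation on the second factor, so $B\rtimes_\beta G\cong A\otimes \bK(L^2(G))$ is classical Takai--Takesaki duality. The $\beta$-equivariant correspondence $(Y,B)$, whose left action carries the cocycle twist via the unitary $v$ of Proposition \ref{coaction}, should realize $(Y\rtimes_\beta G,B\rtimes_\beta G)$ as the stabilized correspondence $(X\otimes\bK(L^2(G)), A\otimes\bK(L^2(G)))$, with the cocycle absorbed into a unitary implementing the twist. Given this, one applies a Hao--Ng-type theorem $\cs(\Q\sk G)\rtimes_\beta G\cong \cs(Y\rtimes_\beta G)$ and invokes the compatibility of Cuntz--Pimsner algebras with tensoring by the compacts to conclude the desired isomorphism.

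The principal obstacle is the Hao--Ng step: the commuting isomorphism between Cuntz--Pimsner algebras and crossed products of correspondences is delicate for general locally compact groups, typically requiring amenability of $G$, discreteness of $G$, or faithfulness conditions on the left action and the ideals $J_X, J_Y$. A secondary difficulty is tracking the cocycle twist carefully through the identification of the crossed product correspondence with the stabilization, since when the left action of $A$ on $X$ fails to be faithful this twist can interact nontrivially with $J_X$ and disrupt the lifting of coefficient-level imprimitivity to the Cuntz--Pimsner level. A reasonable interim target is to prove maximality under the assumption that $G$ is amenable (where the Hao--Ng step is unambiguous) and that the source set of $\Q$ has empty interior (so that $\phi_A$ is faithful), leaving the general case as a refinement.
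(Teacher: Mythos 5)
This statement is a \emph{conjecture}: the paper offers no proof of it, and the author explicitly states that they have been unable to demonstrate maximality of $\zeta$, noting only that the claim is trivial for amenable groups and known for discrete groups acting on directed graphs via \cite{kqr:graph}. So there is no proof in the paper to compare against, and your proposal must be judged on whether it closes the question on its own. It does not. Your reformulation is sound and is the natural line of attack: maximality of $\zeta$ is indeed equivalent to injectivity of the canonical surjection $\cs(\Q)\rtimes_\zeta G\rtimes_{\hat\zeta}G\to\cs(\Q)\otimes\bK(L^2(G))$, and transporting $\hat\zeta$ through $\Pi\times\psi$ to the translation action $\beta$ (already introduced in the proof of Proposition \ref{injective}) correctly reduces the problem to identifying $\cs(\Q\sk G)\rtimes_\beta G$ with $\cs(\Q)\otimes\bK(L^2(G))$. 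But the argument then bottoms out exactly where you say it does: the Hao--Ng step, i.e.\ whether $\mathcal{O}_{Y}\rtimes_\beta G\cong\mathcal{O}_{Y\rtimes_\beta G}$ for a general locally compact group. This is itself a well-known open problem, so what you have written is a roadmap that terminates at another conjecture rather than a proof. One further caution: even granting the Hao--Ng isomorphism and the identification of $(Y\rtimes_\beta G, B\rtimes_\beta G)$ with the stabilization of $(X,A)$, you would still need to check that the resulting isomorphism is implemented by the \emph{canonical} map $\Phi$; an abstract isomorphism of the double crossed product with $\cs(\Q)\otimes\bK(L^2(G))$ does not by itself yield maximality.

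Your proposed interim target also buys less than it appears to. For amenable $G$ the classes of normal and maximal coactions coincide, so maximality of $\zeta$ follows immediately from Corollary \ref{normal} with no Hao--Ng input, no stabilization argument, and no hypothesis on the interior of the source set; this is precisely the sense in which the paper says the conjecture ``holds trivially for amenable groups.'' If you want to make genuine progress beyond the paper, the substantive open content is the nonamenable case, and there the interaction of the cocycle twist with the Katsura ideal $J_Y$ (which you flag as a secondary difficulty) is likely to be as serious an obstruction as the Hao--Ng step itself, since $J_X$ for a quiver is $C_0(E^0_{\rm reg})$ and the regular vertices of $\Q\sk G$ must be tracked through the crossed product identification.
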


The conjecture holds trivially for amenable groups, and the analysis in \cite{kqr:graph} shows that this holds for discrete groups and directed graphs without restriction. A positive answer this question would generalize the work in the discrete setting and strengthen the results contained in \cite{tgcoact, dkq}. As a final application, we deduce consequences for the \cs-algebra of a quiver which has a suitable principal action by a locally compact group.

\begin{cor}
If $\Q$ is a topological quiver equipped with a free and proper action $\alpha$ by $G$ for which the vertex orbit bundle $(E^0, q, E^0/G)$ is trivial, then 
\[
\cs(\Q)\times_{\alpha, r} G\cong \cs(q(\Q))\otimes \bK(L^2G).
\]
Consequently, the reduced crossed product is Morita equivalent to the \cs-algebra of the quotient quiver.
\end{cor}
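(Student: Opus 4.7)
The plan is to assemble the corollary by stringing together the classification theorem for skew products, the main isomorphism theorem, and Katayama-style duality for normal coactions. First, since $\alpha$ is free and proper and the vertex orbit bundle is trivial, Theorem \ref{classify} supplies a cocycle $\kappa:E^1/G\to G$ and a quiver isomorphism $\Q\cong q(\Q)\times_\kappa G$ that intertwines the given action $\alpha$ with right translation in the second coordinate of the skew product. Then Theorem \ref{isomorphism} yields a canonical isomorphism
\[
\Pi\times\psi:\cs(q(\Q))\times_\zeta G\xrightarrow{\cong} \cs(q(\Q)\times_\kappa G)\cong \cs(\Q),
\]
where $\zeta$ is the coaction of $G$ on $\cs(q(\Q))$ produced by $\kappa$ via Proposition \ref{coaction}.

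The key compatibility step is to verify that the transported action $\alpha$ on $\cs(q(\Q))\times_\zeta G$ agrees with the dual action $\hat\zeta$. This is the main obstacle, and I would handle it by checking the assertion on generators: the dual action $\hat\zeta$ is characterized by $\hat\zeta_h\circ j_{\cs(q(\Q))}=j_{\cs(q(\Q))}$ and $\hat\zeta_h\circ j_G=j_G\circ\rt_h$, and this is exactly how right translation on the second coordinate of the skew product acts on the images $k_B(a\otimes f)$ and $k_Y(x\otimes f)$ identified in the proof of Proposition \ref{surjective}. In particular, $\alpha_h$ fixes the subalgebra coming from $\Pi$ and translates the $C_0(G)$-image by $\rt_h$, matching $\hat\zeta_h$ precisely.

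Once $\alpha\cong\hat\zeta$ under $\Pi\times\psi$, I pass to reduced crossed products and apply Katayama's duality theorem, which asserts $B\times_\delta G\times_{\hat\delta,r}G\cong B^{n}\otimes\bK(L^2 G)$ for any coaction $\delta$ on $B$, where $B^n$ denotes the normalization. Corollary \ref{normal} guarantees $\zeta$ is normal, so $\cs(q(\Q))^n=\cs(q(\Q))$, and we obtain
\[
\cs(\Q)\times_{\alpha,r}G\;\cong\;\cs(q(\Q))\times_\zeta G\times_{\hat\zeta,r}G\;\cong\;\cs(q(\Q))\otimes\bK(L^2 G).
\]
The Morita equivalence in the final statement is immediate, since tensoring with $\bK(L^2 G)$ always produces a stable Morita equivalent algebra. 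Aside from the compatibility check of $\alpha$ with $\hat\zeta$, which is the substantive item, the rest of the argument is a straightforward composition of already-established results.
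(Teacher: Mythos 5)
Your proposal is correct and follows essentially the same route as the paper: invoke Theorem \ref{classify} to realize $\Q$ as a skew product, Theorem \ref{isomorphism} to identify $\cs(\Q)$ $G$-equivariantly with $\cs(q(\Q))\times_\zeta G$ (the paper asserts this equivariance where you verify it on generators, a worthwhile addition), and then Katayama duality together with normality of $\zeta$ from Corollary \ref{normal}.
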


\begin{proof}
Write $\R = q(\Q)$ the quotient quiver under the action. According to Theorem \ref{classify}, $\Q$ is a skew product quiver $\R\times_\kappa G$, so $\cs(\Q) = \cs(\R\times_\kappa G)$ is $G$-equivariantly isomorphic to $\cs(\R)\times_\zeta G$ for the natural coaction $\zeta$. Since $\zeta$ is a normal coaction (see Corollary \ref{normal}) Katayama duality applies \cite{enchilada}*{Theorem A.69}, and 
\[\cs(\Q)\times_{\alpha, r} G \cong \cs(\R)\times_\zeta G \times_\alpha G = \cs(\R)\otimes \bK.\]  
\end{proof}




\begin{bibdiv}
\begin{biblist}
\bibselect{quiver}
\end{biblist}
\end{bibdiv}

\end{document}